\newtheoremstyle{theoremstyle}
  {10pt}      
  {5pt}       
  {\itshape}  
  {}          
  {\bfseries} 
  {:}         
  {.5em}      
  {}          
\newtheoremstyle{examplestyle}
  {10pt}      
  {5pt}       
  {}          
  {}          
  {\bfseries} 
  {:}         
  {.5em}      
  {}          
\theoremstyle{theoremstyle}
\newtheorem{theorem}{Theorem}[section]
\newtheorem*{theorem*}{Theorem}
\newtheorem{lemma}[theorem]{Lemma}
\newtheorem{proposition}[theorem]{Proposition}
\newtheorem*{proposition*}{Proposition}
\newtheorem{corollary}[theorem]{Corollary}
\newtheorem*{corollary*}{Corollary}
\newtheorem{example}[theorem]{Example}
\newtheorem{definition}[theorem]{Definition}
\newtheorem{definition*}{Definition}
\newtheorem{remark}[theorem]{Remark}
\newtheorem{remark*}{Remark}
\newcommand{\x}{\mathsf{x}}
\newcommand{\y}{\mathsf{y}}
\newcommand{\MU}{\mathsf{MU}}
\newcommand{\KU}{\mathsf{KU}}
\newcommand{\Z}{\mathbf{Z}}
\newcommand{\CP}{\mathbf{CP}}
\newcommand{\LK}{\mathsf{LK}}
\newcommand{\MGL}{\mathsf{MGL}}
\newcommand{\KGL}{\mathsf{KGL}}
\newcommand{\EE}{\mathsf{E}}
\newcommand{\BGL}{\mathsf{B}\mathbf{GL}}
\newcommand{\BU}{\mathsf{BU}}
\renewcommand{\AA}{{\mathbf A}}
\newcommand{\PP}{{\mathbf P}}
\newcommand{\TT}{{\mathbf T}}
\newcommand{\thclass}{\mathsf{th}}
\newcommand{\chclass}{\mathsf{ch}}
\newcommand{\Th}{\mathsf{Th}}
\newcommand{\GGG}{\mathsf{G}}
\newcommand{\caT}{{\mathcal T}}
\newcommand{\caL}{{\mathcal L}}
\newcommand{\op}{\mathrm{op}}
\newcommand{\MMM}{\mathsf{M}}
\newcommand{\Hom}{\mathsf{Hom}}
\newcommand{\cc}{\mathsf{c}}
\newcommand{\Gr}{\mathbf{Gr}}
\newcommand{\Pic}{\mathsf{Pic}}
\newcommand{\Set}{\mathsf{Set}}
\newcommand{\uPic}{\underline{\mathsf{Pic}}}
\newcommand{\A}{\mathbf{A}}
\renewcommand{\H}{\mathbf{H}}
\newcommand{\G}{\mathbf{G}_{\mathfrak{m}}}
\newcommand{\SH}{\mathbf{SH}}
\newcommand{\BG}{\mathsf{B}\mathbf{G}_{\mathfrak{m}}}
\newcommand{\LQ}{\mathsf{L}\mathbf{Q}}
\newcommand{\HQ}{\mathsf{H}\mathbf{Q}}
\newcommand{\id}{\mathsf{id}}
\newcommand{\Sm}{\mathsf{Sm}}
\newcommand{\Spec}{\mathsf{Spec}}
\newcommand{\Ho}{\mathsf{Ho}}
\newcommand{\Nis}{\mathsf{Nis}}
\newcommand{\Zar}{\mathsf{Zar}}
\newcommand{\colim}{\mathrm{colim}}
\newcommand{\sPr}{\mathbf{sPre}}
\title{\bf Chern classes, K-theory and Landweber exactness over nonregular base schemes}
\author{Niko Naumann, Markus Spitzweck, Paul Arne {\O}stv{\ae}r}
\date{\today}
\begin{document}
\begin{abstract}
The purpose of this paper is twofold.
First, 
we use the motivic Landweber exact functor theorem to deduce that the Bott inverted infinite projective 
space is homotopy algebraic $K$-theory.
The argument is considerably shorther than any other known proofs and serves well as an illustration of 
the effectiveness of Landweber exactness.
Second,  
we dispense with the regularity assumption on the base scheme which is often implicitly required in the 
notion of oriented motivic ring spectra. 
The latter allows us to verify the motivic Landweber exact functor theorem and the universal property of 
the algebraic cobordism spectrum for every noetherian base scheme of finite Krull dimension. 
\end{abstract}
\maketitle
\tableofcontents
\subjclass{Primary: 55N22, 55P42; Secondary: 14A20, 14F43, 19E08.}
\newpage

\section{Landweber exactness and  $K$-theory}
\label{section:LEandhaKtheory}
It has been known since time immemorial that every oriented ring spectrum acquires a formal group law \cite{Adams}.
Quillen \cite{Quillen} showed that the formal group law associated to the complex cobordism spectrum $\MU$ 
is isomorphic to Lazard's universal formal group law;
in particular, 
the Lazard ring that corepresents formal group laws is isomorphic to the complex cobordism ring $\MU_{\ast}$.
This makes an important link between formal group laws and the algebraic aspects of stable homotopy theory.
Let $\KU$ denote the periodic complex $K$-theory spectrum.
Bott periodicity shows the coefficient ring $\KU_{\ast}\cong\Z[\beta,\beta^{-1}]$.
Under the isomorphism $\KU_{2}\cong\widetilde{\KU}^{0}(S^{2})$, 
the Bott element $\beta$ maps to the difference $1-\xi_{\infty_{\vert\CP^{1}}}$ between the trivial line bundle 
and the restriction of the tautological line bundle $\xi_{\infty}$ over $\CP^{\infty}$ to $\CP^{1}\cong S^{2}$.
Thus the class $\beta^{-1}(1-\xi_{\infty})$ in $\widetilde{\KU}^{2}(\CP^{\infty})$ defines an orientation on $\KU$.
It is straightforward to show that the corresponding degree $-2$ multiplicative formal group law is $F_{\KU}(\x,\y)=\x+\y-\beta\x\y$, 
and the ring map $\MU_{\ast}\rightarrow\KU_{\ast}$ classifying $F_{\KU}$ sends $v_{0}$ to $p$ and $v_{1}$ to $\beta^{p-1}$ 
for every prime number $p$ and invariant prime ideal $(p,v_{0},v_{1},\dots)$ of the complex cobordism ring.
This map turns $\Z[\beta,\beta^{-1}]$ into a Landweber exact graded $\MU_{\ast}$-algebra because the multiplication 
by $p$ map on the integral Laurent polynomial ring in the Bott element is injective with cokernel $\Z/p[\beta,\beta^{-1}]$, 
and the multiplication by $\beta^{p-1}$ map on the latter is clearly an isomorphism.
Having dealt with these well known topological preliminaries, 
we are now ready to turn to motivic homotopy theory over a noetherian base scheme $S$ of finite Krull dimension.
\vspace{0.1in}

Applying the considerations above to the motivic Landweber exact functor theorem announced in \cite[Theorem 8.6]{NSO} and extended to 
nonregular base schemes in Theorem \ref{theorem:mleft}, 
imply there exists a Tate object $\LK$ in the motivic stable homotopy category $\SH(S)$ and an isomorphism of motivic homology theories
\begin{equation*}
\LK_{\ast,\ast}(-)\cong
\MGL_{\ast,\ast}(-)\otimes_{\MU_{\ast}}\Z[\beta,\beta^{-1}].
\end{equation*}
Here $\MGL$ denotes the algebraic cobordism spectrum, 
and the category $\SH(S)_{\caT}$ of Tate objects or cellular spectra refers to the smallest localizing triangulated 
subcategory of $\SH(S)$ containing all mixed motivic spheres \cite{DI}, \cite{NSO}.
In addition, 
there exists a quasi-multiplication $\LK\wedge\LK\rightarrow\LK$, 
turning $\LK$ into a commutative monoid modulo phantom maps,  
which represents the ring structure on the Landweber exact homology theory $\LK_{\ast,\ast}(-)$.
Recall from \cite{VVicm} the motivic spectrum $\KGL$ representing homotopy algebraic $K$-theory on $\SH(S)$. 
We show:
\begin{proposition}
\label{proposition:LKKGL}
There is an isomorphism in $\SH(S)$ between $\LK$ and $\KGL$.
\end{proposition}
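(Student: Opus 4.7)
The plan is to construct a comparison morphism $\phi \colon \LK \to \KGL$ in $\SH(S)$ by exploiting the universal properties on both sides, and then to verify it is an isomorphism by a cellularity argument.

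For the construction, I would use that $\KGL$ carries a canonical orientation given by the motivic Bott class $\beta^{-1}(1-\xi_\infty)$, mirroring the topological story rehearsed above. Since $\MGL$ is the universal oriented motivic commutative ring spectrum---a fact which the paper extends to nonregular bases---this orientation produces a classifying morphism $\MGL \to \KGL$ (a ring map modulo phantoms) whose effect on coefficients is the expected classifier of the multiplicative formal group law $F_{\KGL}(\x,\y) = \x + \y - \beta\x\y$ and therefore factors through the quotient $\MU_{\ast} \to \Z[\beta,\beta^{-1}]$. Invoking the universal property built into the Landweber exact functor theorem applied to the Landweber exact $\MU_{\ast}$-algebra $\Z[\beta,\beta^{-1}]$ then produces the desired morphism $\phi \colon \LK \to \KGL$ in $\SH(S)$.

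To show $\phi$ is an isomorphism, I would work $\MGL$-module-theoretically. Both $\LK$ (by construction) and $\KGL$ (via the orientation) are $\MGL$-module spectra, and $\phi$ is compatible with these structures. Applying $\MGL_{\ast,\ast}(-)$ produces a map
\[
\MGL_{\ast,\ast}(\LK)\;\cong\;\MGL_{\ast,\ast}(\MGL)\otimes_{\MU_{\ast}}\Z[\beta,\beta^{-1}]\;\longrightarrow\;\MGL_{\ast,\ast}(\KGL),
\]
whose target is identified with the same tensor product by Landweber exactness applied to the multiplicative formal group law; hence $\phi$ is an $\MGL$-homology equivalence. Both spectra are Tate---$\LK$ tautologically as a Landweber output, and $\KGL$ by an appropriate extension of Dugger--Isaksen cellularity valid over the present noetherian finite-dimensional base---so an $\MGL$-homology equivalence between them is already an isomorphism in $\SH(S)$.

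The main obstacle I anticipate is establishing the two auxiliary facts that feed into the last paragraph: cellularity of $\KGL$ over a possibly nonregular noetherian base of finite Krull dimension, and the Landweber-exact identification of $\MGL_{\ast,\ast}(\KGL)$. Both are of a familiar flavour but require care to push through without regularity assumptions, and together they carry essentially all the content of the proposition; the construction of $\phi$ itself is entirely formal once these are in place.
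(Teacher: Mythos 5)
Your overall strategy---build a comparison map $\phi\colon\LK\to\KGL$ from the orientation on $\KGL$, show it is an $\MGL_{\ast,\ast}$-equivalence, and conclude by cellularity---is the same as the paper's. There are, however, two places where the proposal glosses over steps that actually carry the weight of the argument.

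First, and most substantively, the claim that the target $\MGL_{\ast,\ast}(\KGL)$ ``is identified with the same tensor product by Landweber exactness applied to the multiplicative formal group law'' does not hold as stated. Landweber exactness tells you what $\LK_{\ast,\ast}(X)$ is for any $X$ (namely $\MGL_{\ast,\ast}(X)\otimes_{\MU_\ast}\Z[\beta,\beta^{-1}]$), but it does not compute $\MGL_{\ast,\ast}(\KGL)$: that would require already knowing that $\KGL_{\ast,\ast}(-)\cong\MGL_{\ast,\ast}(-)\otimes_{\MU_\ast}\Z[\beta,\beta^{-1}]$, i.e.\ the Conner--Floyd theorem, which is precisely what the proposition is supposed to reprove. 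The paper instead performs a genuine computation: it unwinds $\MGL_{\ast,\ast}(\KGL)$ as $\colim_n\MGL_{\ast+2n,\ast+n}(\Z\times\BGL)$, uses the projective-bundle/Grassmannian computation to identify $\MGL_{\ast,\ast}(\BGL)\cong\MGL_{\ast,\ast}\otimes_{\MU_\ast}\MU_\ast\BU$, and then checks compatibility of the structure maps of $\KGL$ and $\KU$ to pass to the colimit and land in $\MGL_{\ast,\ast}\otimes_{\MU_\ast}\MU_\ast\KU$. This colimit argument is where the real content lives, and it cannot be replaced by an appeal to Landweber exactness of $\KGL$ without circularity.

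Second, the paper reduces to $S=\Spec(\Z)$ at the outset, by observing that both $\LK$ and $\KGL$ pull back along base change. This reduction is not cosmetic: it is what makes the Brown representability needed to realize the transformation as a map of spectra available (citing that $\SH(\Z)_{\mathcal T}$ is a Brown category), and it is what lets one invoke Dugger--Isaksen cellularity of $\KGL$ and the $\pi_{\ast,\ast}$-detection criterion directly, without having to ``appropriately extend'' those results to an arbitrary nonregular base. Your plan would need an argument of this kind (or a genuinely base-change-free substitute), and it should also be made explicit that the passage from ``$\MGL$-homology equivalence'' to ``$\pi_{\ast,\ast}$-equivalence'' uses that $\pi_{\ast,\ast}\Phi$ is a retract of $\MGL_{\ast,\ast}\Phi$ via the unit and module structure maps, rather than being automatic for arbitrary Tate objects.
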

\begin{proof}
It suffices to prove the result when $S=\Spec(\Z)$ because $\LK$ and $\KGL$ pull back to general base schemes; 
the former by \cite[Proposition 8.4]{NSO} and the latter by the following argument, cf.~\cite[\S6.2]{VVicm}:
To compute the derived pullback one takes cofibrant replacements for a model structure such that the 
pullback and pushforward functors form a Quillen pair; 
this fails for the injective motivic model structure \cite{MV}, but works for the projective motivic model structure \cite{DRO}.
Recall that $\Z\times\BGL$ comprises all the constituent motivic spaces in $\KGL$.
Let $f\colon Q\KGL\rightarrow\KGL$ be a cofibrant functorial replacement of $\KGL$ so that $f$ is a trivial fibration.
Now trivial fibrations in the stable model structure for motivic spectra coincide with the trivial fibrations in the 
projective model structure on motivic spectra, 
so it follows that this is a level weak equivalence. 
This model categorical argument shows that a 
levelwise derived pullback is a derived pullback of motivic spectra. 
Since the spaces $\BGL$ pull back this implies the pullback gives something level equivalent to the standard model for $\KGL_S$.

The motivic Bott element turns $\KGL$ into an oriented motivic ring spectrum with multiplicative formal group law \cite[Example 2.2]{SO}.
It follows that there is a canonical transformation $\LK_{**}(-)\rightarrow\KGL_{**}(-)$ of $\MGL_{**}(-)$-module homology theories.
Since $\SH(\Z)_\caT$ is a Brown category \cite[Lemma 8.2]{NSO}, 
we conclude there is a map $\Phi\colon\LK\to\KGL$ of $\MGL$-module spectra representing the transformation.
\vspace{0.1in}

Now since $\LK$ and $\KGL$ are Tate objects; the former by construction and the latter by \cite{DI}, 
$\Phi$ is an isomorphism provided the naturally induced map between motivic stable homotopy groups
\begin{equation}
\label{mapinhomotopy}
\xymatrix{
\pi_{\ast,\ast}\Phi\colon\pi_{\ast,\ast}(\LK\ar[r] & \KGL) }
\end{equation}
is an isomorphism \cite[Corollary 7.2]{DI}.
The map in (\ref{mapinhomotopy}) is a retract of
\begin{equation}
\label{mapinMGLhomology}
\xymatrix{
\MGL_{\ast,\ast}\Phi\colon\MGL_{\ast,\ast}(\LK\ar[r] & \KGL). }
\end{equation}
To wit, 
there is a commutative diagram
\[ 
\xymatrix{ 
\LK\ar[r]\ar[d]^\Phi & \MGL\wedge\LK\ar[r]\ar[d]^{\MGL\wedge\Phi} & \LK\ar[d]^\Phi\\
\KGL\ar[r] & \MGL\wedge\KGL\ar[r] & \KGL }
\]
where the horizontal compositions are the respective identity maps.
Thus it suffices to prove the map in (\ref{mapinMGLhomology}) is an isomorphism.
\vspace{0.1in}

By \cite[Remark 9.2]{NSO} there is an isomorphism
\begin{equation}
\label{equation:firstisomorphism}
\MGL_{**}\LK\cong
\MGL_{**}\otimes_{\MU_*} \MU_*\KU.
\end{equation}
The latter identifies with the target of $\MGL_{\ast,\ast}\Phi$ via the isomorphisms
\begin{equation}
\label{equation:seconfisomorphism}
\begin{aligned}
\MGL_{**}\KGL 
& \cong \colim_{n}\,\MGL_{*+2n,*+n}(\Z\times\BGL)\\
& \cong \MGL_{**}\otimes_{\MU_*}\colim_n \,\MU_{*+2n}(\Z\times\BU) \\
& \cong \MGL_{**}\otimes_{\MU_*}\MU_*\KU. 
\end{aligned}
\end{equation}
In the third isomorphism we use compatibility of the structure maps of $\KGL$ and $\KU$, 
and the computation $\MGL_{**}(\BGL)\cong\MGL_{**}\otimes_{\MU_*}\MU_*\BU$.

It remains to remark that $\MGL_{**} \Phi$ is the identity with respect to the isomorphisms in 
(\ref{equation:firstisomorphism}) and (\ref{equation:seconfisomorphism}).
\end{proof}

\begin{remark}
Proposition \ref{proposition:LKKGL} reproves the general form of the Conner-Floyd theorem for homotopy algebraic $K$-theory \cite{SO}. 
We would like to point out that this type of result is more subtle than in topology:
Recall that if $E$ is a classical homotopy commutative ring spectrum which is complex orientable and the resulting formal 
group law is Landweber exact, 
then $E_*(-) = \MU_*(-) \otimes_{\MU_*} E_*$.
Now suppose $S$ has a complex point and let $\EE$ be the image of $\HQ$ under the right adjoint of the realization functor.
It is easy to see that $\EE$ is an orientable motivic ring spectrum with additive formal group law and $\EE_* \cong \LQ_*$,
at least for fields.
However, 
since $\EE_{p,q} = \EE_{p,q'}$ for all $p,q,q'\in\Z$, 
we get that $\EE \not\cong \LQ$.
\end{remark}

In \cite{SO} it is shown that the Bott inverted infinite projective space is the universal multiplicative 
oriented homology theory.
Thus there exists an isomorphism between the representing spectra $\LK$ and $\Sigma^\infty\PP^\infty_+[\beta^{-1}]$. 
By \cite[Remark 9.8(ii)]{NSO} there is a unique such isomorphism,
and it respects the monoidal structure.
This allows us to conclude there is an isomorphism in $\SH(S)$ between $\Sigma^\infty\PP^\infty_+[\beta^{-1}]$ and $\KGL$.
An alternate computational proof of this isomorphism was given in \cite{SO} and another proof was announced in \cite{GS}.
In the event of a complex point on $S$,
the topological realization functor maps $\LK_{\ast,\ast}(-)$ to $\KU_{\ast}(-)$ and likewise for $\KGL$.
For fun,
we note that running the exact same tape in stable homotopy theory yields Snaith's isomorphism 
$\Sigma^\infty\CP^\infty_+[\beta^{-1}]\cong\KU$.
In the topological setting, 
the argument above could have been given over thirty years ago.
\vspace{0.1in}

In the proof of Proposition \ref{proposition:LKKGL} we used the universal property of the homology theory associated to $\MGL$.
For the proof this is only needed over $\Z$.
In the following sections we shall justify the claim that this holds over general base schemes.

\begin{theorem}
\label{theorem:MGLuniversality}
Suppose $S$ is a noetherian scheme of finite Krull dimension and $\EE$ a commutative ring spectrum in $\SH(S)$.
Then there is a bijection between the sets of ring spectra maps $\MGL\rightarrow\EE$ in $\SH(S)$ and the orientations on $\EE$.
\end{theorem}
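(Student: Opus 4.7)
The plan is to adapt the Panin--Pimenov--R\"ondigs argument (which establishes the analogous universal property over regular bases, as a motivic refinement of Quillen's theorem for $\MU$) to the nonregular setting. The forward direction is immediate: any ring spectrum map $\phi\colon\MGL\to\EE$ pushes forward the canonical orientation $\thclass_\MGL$ on $\MGL$ to an orientation $\phi_{*}\thclass_\MGL$ on $\EE$. The bulk of the work is to reverse this construction and to verify that the two assignments are mutually inverse.

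Given an orientation $\thclass\in\EE^{2,1}(\PP^{\infty})$, I would first produce, for each $n\geq 0$, a canonical Thom class $\thclass_n\in\EE^{2n,n}(\Th(\gamma_n))$ for the tautological rank-$n$ bundle $\gamma_n$ on $\BGL_n$. The mechanism is the splitting principle: the diagonal pullback $\BGL_1^{\times n}\to\BGL_n$ induces an injection on $\EE^{**}$-cohomology (after a suitable completion), so there is at most one Thom class on $\gamma_n$ restricting to $\thclass^{\boxtimes n}$, and such a class is produced by a symmetric-function argument from the Chern roots. The adjoint maps $\Th(\gamma_n)\to\Sigma^{2n,n}\EE$ are compatible with the structure maps of $\MGL$, so they assemble into a stable map $\MGL\to\EE$; the requisite $\lim^{1}$ vanishes by the same freeness computation.

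The next step is to promote this stable map to a commutative ring map. Using the splitting principle on $\BGL_n\times\BGL_m$, the external product $\thclass_n\boxtimes\thclass_m$ must coincide with the pullback of $\thclass_{n+m}$ along the Whitney sum classifying map, since both classes are uniquely characterized by their restrictions to $\BGL_1^{\times(n+m)}$. The two composites in the purported bijection are then the identity by construction: restricting the stable Thom class on $\MGL$ to $\MGL_1$ recovers $\thclass$, and conversely any ring spectrum map $\MGL\to\EE$ is determined levelwise by the Thom classes it induces, via the same $\lim^{1}$-argument.

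The principal obstacle is furnishing the required cohomological computations for an orientable $\EE$ over a noetherian scheme of finite Krull dimension \emph{without} the regularity hypothesis: the projective bundle formula, the description of $\EE^{**}(\BGL_n)$ as a completed polynomial ring in the Chern roots, and the splitting injection. These are precisely the statements that the existing literature typically establishes only over regular bases, and extending them to this generality is the purpose of the subsequent sections of the paper. Once in hand, the formal part of the argument above proceeds essentially verbatim.
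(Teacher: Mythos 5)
Your proposal is correct and takes essentially the same approach as the paper: both follow Panin--Pimenov--R\"ondigs, constructing the inverse bijection by producing Thom classes $\thclass(\mathcal T(n))$ for all $n$, assembling them via the computation $\EE^{\ast\ast}(\MGL)\cong\varprojlim\EE^{\ast+2n,\ast+n}(\Th(\mathcal T(n)))$ (which absorbs the $\lim^1$ issue), and checking multiplicativity through the analogous formula for $\EE^{\ast\ast}(\MGL\wedge\MGL)$. You correctly identify that the real content, deferred to the later sections, is extending the Chern class theory, projective bundle formula, Thom isomorphism, and the $\EE^{\ast\ast}(\MGL)$ computation to nonregular bases --- which is exactly how the paper organizes things, with one small addition you leave implicit: over a nonregular $S$ the isomorphism $\Pic(X)\cong\Hom_{\H(S)}(X,\PP^\infty)$ itself fails, so even the definition of the first Chern class requires the preliminary result that $\PP^\infty$ represents the $\A^1$-localization of $\Pic$.
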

The proof also shows there is an analogous result for homology theories.
For fields,
this result is known thanks to the works of Vezzosi \cite{Vezzosi} and more recently of Panin-Pimenov-R{\"o}ndigs \cite{PPR}.
The generalization of their results to regular base schemes is straightforward using, 
for example, 
the results in \cite{NSO}.

In order to prove Theorem \ref{theorem:MGLuniversality} we compute the $\EE$-cohomology of $\MGL$ for $\EE$ an oriented motivic 
ring spectrum;
in turn, 
this makes use of the Thom isomorphism for universal vector bundles over Grassmannians.
Usually the base scheme is tactically assumed to be regular for the construction of Chern classes of vector bundles.
However, 
since the constituent spaces comprising the spectrum $\MGL$ are defined over the integers $\Z$ one gets Chern classes and the 
Thom isomorphism by pulling back classifying maps of line bundles.
In any event, 
one purpose of what follows is to dispense with the regularity assumption for the construction of Chern classes.
For the remaining part of the proof we note that the streamlined presentation in \cite{PPR} following the route layed out in 
algebraic topology carries over verbatim.

\section{Chern classes and oriented motivic ring spectra}
\label{section:Chernclassesandorientedmotivicringspectra}
The basic input required for the theory of Chern classes is that of the first Chern class of a line bundle.
If the base scheme $S$ is regular, there is an isomorphism
$$
\Pic(X)\cong
\Hom_{\H(S)}(X,\PP^\infty)
$$
between the Picard group of $X$ and maps from $X$ to the infinite projective space $\PP^\infty$ in the unstable motivic homotopy 
category $\H(S)$ \cite[Proposition 4.3.8]{MV}.
If $S$ is nonregular, 
there is no such isomorphism because the functor $\Pic(-)$ is not $\A^1$-invariant. 
The first goal of this section is to prove that the most naive guess as to what happens in full generality holds true, 
namely: 
\begin{center}
$\PP^\infty$ represents the $\A^1$-localization of $\Pic$.
\end{center}
\vspace{0.1in}

For $\tau$ some topology on the category of smooth $S$-schemes $\Sm/S$, 
let $\sPr(\Sm/S)^\tau$ denote any one of the standard model structure on simplicial presheaves with $\tau$-local weak equivalences and
likewise for $\sPr(\Sm/S)^{\tau,\A^1}$ and $\tau-\A^1$-local weak equivalences.
The topologies of interest in what follows are the Zariski and Nisnevich topologies, denoted by $\Zar$ and $\Nis$ respectively.
Let $\H(S)$ denote the unstable motivic homotopy category of $S$ associated to the model structure $\sPr(\Sm/S)^{\Nis,\A^1}$.
Throughout we will use the notation $\G$ for the multiplicative group scheme over $S$, 
$\uPic(X)$ for the Picard groupoid of a scheme $X$, 
and $\nu \uPic$ for the simplicial presheaf obtained from (a strictification of) $\uPic$ by applying the nerve functor.

\begin{proposition}
\label{proposition:picpinfty}
\begin{itemize}
\item[(i)] There exists a commutative diagram
$$
\xymatrix{
\cc_X \colon\Pic(X)\ar[rr] & & \Hom_{\H(S)}(X,\PP^\infty) \\
& \Hom_{\Set^{(\Sm/S)^\op}}(X,\PP^\infty) 
\ar[lu]^f \ar[ru]_g &}
$$
which is natural in $X\in\Sm/S$, where
$f$ is given by pulling back the tautological
line bundle on $\PP^\infty$ and $g$ is
the canonical map.
The transformation $\cc$ is an isomorphism when $S$ is regular.
\item[(ii)] There is a natural weak equivalence 
$$
\PP^\infty\simeq\nu\uPic
$$ 
in $\sPr(\Sm/S)^{\Zar,\A^1}$.
\end{itemize}
\end{proposition}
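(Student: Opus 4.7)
My plan is to construct, for both parts, a natural comparison map $\Phi \colon \PP^\infty \to \nu\uPic$ of simplicial presheaves on $\Sm/S$. On each stage $\PP^n$, the tautological line bundle $\mathcal{O}(-1)$ determines a natural transformation whose $X$-component sends a morphism $\varphi\colon X\to\PP^n$ to the vertex $\varphi^*\mathcal{O}(-1)\in\nu\uPic(X)$; these stages are compatible under the standard inclusions $\PP^n\hookrightarrow\PP^{n+1}$ since the tautological bundles restrict correctly, and they assemble to $\Phi$ on the colimit $\PP^\infty$.

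For part (ii), I would verify that $\Phi$ is a weak equivalence in $\sPr(\Sm/S)^{\Zar,\A^1}$ by comparing homotopy sheaves after Zariski sheafification and $\A^1$-localization. The presheaf $\nu\uPic$ has $\pi_0=\Pic$ and, at a basepoint line bundle $L$, $\pi_1(-,L)=\G$, with higher homotopy presheaves trivial; at any Zariski stalk every line bundle is trivial, so $\Pic$ vanishes stalkwise and $\nu\uPic$ becomes stalkwise equivalent to $\BG$. For $\PP^\infty$ the principal $\G$-bundles $\A^{n+1}\setminus 0\to\PP^n$ have $\A^1$-$(n-1)$-connected total spaces, so in the colimit $\PP^\infty$ is $\A^1$-equivalent to $\BG$ as well. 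The map $\Phi$ is compatible with the two realizations of $\pi_1$ because it sends the $\G$-torsor classified by $\mathcal{O}(-1)$ on $\PP^n$ to the corresponding object of $\nu\uPic$ by construction, reducing (ii) to matching the resulting $\pi_1$-sheaves.

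For part (i), I would take $c_X$ to be the composition
\begin{equation*}
\Pic(X)=\pi_0(\nu\uPic(X))\longrightarrow \Hom_{\H(S)}(X,\nu\uPic)\xleftarrow{\;\sim\;}\Hom_{\H(S)}(X,\PP^\infty),
\end{equation*}
where the second map is induced by $\Phi$; since $\nu\uPic$ and $\PP^\infty$ both satisfy Nisnevich descent on $\Sm/S$, the Zariski-$\A^1$-equivalence of (ii) upgrades to a Nisnevich-$\A^1$-equivalence, providing the isomorphism in $\H(S)$. Commutativity of the triangle is then immediate from the construction of $\Phi$: a presheaf map $\varphi\colon X\to\PP^\infty$ yields $f(\varphi)=\varphi^*\mathcal{O}(-1)$, and $c_X$ applied to this line bundle is, by definition, represented by $\varphi$, that is by $g(\varphi)$. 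When $S$ is regular, $\Pic$ is $\A^1$-invariant on $\Sm/S$, so $\nu\uPic$ is already Zariski-$\A^1$-local and $\Hom_{\H(S)}(X,\nu\uPic)=\Pic(X)$; hence $c_X$ is an isomorphism, recovering the theorem of Morel-Voevodsky.

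The main obstacle is the $\pi_0$ step in (ii): one must show that the $\A^1$-localized Zariski-sheafified $\Pic$ functor is naturally identified, via $\Phi$, with the $\A^1$-localized Zariski-sheafified $\pi_0$ of $\PP^\infty$, even when $S$ is nonregular and $\Pic$ is not $\A^1$-invariant on $\Sm/S$. I would reduce this to the stalk case, where both sides become connected, and match the induced $\pi_1$-isomorphism by comparing the $\G$-bundle boundary map on the $\PP^\infty$ side with the \v{C}ech-theoretic description of line bundles on the $\nu\uPic$ side.
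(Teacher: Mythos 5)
Your proposal shares the essential ingredients with the paper's proof (Zariski-local triviality of line bundles identifying $\nu\uPic$ with $\BG$ Zariski-locally; $\A^1$-contractibility of $\A^\infty\smallsetminus\{0\}$ to relate $\PP^\infty$ to $\BG$; Zariski descent for $\nu\uPic$ to turn $\pi_0\nu\uPic(X)=\Pic(X)$ into homotopy classes of maps), but the organization is genuinely different, and the difference matters. The paper never constructs a direct map $\PP^\infty\to\nu\uPic$; instead it builds a zigzag through $(\A^\infty\smallsetminus\{0\})/^h\G$ and $\BG$, where each leg is either a \emph{purely Zariski-local} equivalence (Lemmas~\ref{lemma:affine-projective} and~\ref{lemma:bgm-pic}) or a \emph{purely $\A^1$}-equivalence (Lemma~\ref{lemma:affine-bgm}), and hence a $(\Zar,\A^1)$-equivalence for trivial reasons. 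That decomposition is precisely what makes each step checkable.

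Your proposed verification that $\Phi$ is an equivalence is where I see a real gap. You say you would ``compare homotopy sheaves after Zariski sheafification and $\A^1$-localization'' and, in the final paragraph, that you would ``reduce this to the stalk case.'' But $\A^1$-localization is not a stalkwise construction: the homotopy (pre)sheaves of an $\A^1$-local replacement are not computed by $\A^1$-localizing stalks, and there is no general commutation of $\A^1$-localization with taking Zariski stalks. So the reduction you lean on to handle the $\pi_0$ and $\pi_1$ comparison does not go through as stated, and this is exactly the step you yourself identify as ``the main obstacle.'' The paper's zigzag sidesteps this entirely: one never has to compute the $(\Zar,\A^1)$-local homotopy sheaves of either side. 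Two smaller points: (a) asserting that the $\A^1$-$(n-1)$-connectivity of $\A^{n+1}\smallsetminus\{0\}$ implies $\A^\infty\smallsetminus\{0\}$ is $\A^1$-contractible skips a colimit argument; the paper gives an explicit presheaf-level homotopy, and you should too. (b) Your reason for the Nisnevich upgrade is off: a Zariski-$\A^1$-equivalence is automatically a Nisnevich-$\A^1$-equivalence because Zariski covers are Nisnevich covers, not because the two presheaves satisfy Nisnevich descent. Finally, you need the paper's Lemma~\ref{lemma:affine-projective} (that the honest quotient $\PP^n$ agrees Zariski-locally with the homotopy quotient $(\A^{n+1}\smallsetminus\{0\})/^h\G$) to even make sense of ``$\PP^\infty$ is a model for $\BG$''; your text treats this as automatic from ``principal $\G$-bundle.''
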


To prepare for the proof of Proposition \ref{proposition:picpinfty}, 
suppose $\GGG\in\sPr(\Sm/S)$ is a group object acting on a simplicial presheaf $X$. 
Let $X/^h \GGG$ denote the associated homotopy quotient. 
It is defined as the total object (homotopy colimit) of the simplicial object:
$$
\xymatrix{
X & X \times \GGG \ar@<.7ex>[l] \ar@<-.7ex>[l] & 
X \times \GGG^2 \ar[l] \ar@<1ex>[l] \ar@<-1ex>[l] & 
\cdots \ar@<1.1ex>[l]_(.4){:} \ar@<-1.1ex>[l] }
$$
Pushing this out to any of the localizations yields local homotopy quotients since the localization is a left adjoint functor, 
and hence preserves (homotopy) colimits. 
If $X \to Y$ is a $\GGG$-equivariant map with $Y$ having trivial $\GGG$-action, 
there is a naturally induced map $X/^h\GGG\to Y$ in the corresponding homotopy category.
\begin{lemma}
\label{lemma:affine-projective}
The naturally induced map 
$$
\xymatrix{
(\A^{n+1}\smallsetminus \{0\}) /^h \G \ar[r] & \PP^n }
$$ 
is a Zariski local weak equivalence.
\end{lemma}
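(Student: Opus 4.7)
The plan is to check the statement Zariski-locally on $\PP^n$, using the standard affine cover. Let $U_i=\{x_i\neq 0\}\subset\PP^n$ for $i=0,\dots,n$, so that $\{U_i\}$ is a Zariski cover of $\PP^n$. By construction of the Zariski local model structure on $\sPr(\Sm/S)^{\Zar}$, a map of simplicial presheaves is a local weak equivalence whenever its restrictions to an open cover of the target are (global) weak equivalences of simplicial presheaves; equivalently, one can check the condition on stalks at points of schemes in $\Sm/S$. So it suffices to prove that the restriction of
$$
(\A^{n+1}\smallsetminus\{0\})/^h\G\longrightarrow\PP^n
$$
over each $U_i$ is a weak equivalence.

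Next I would trivialize the $\G$-torsor over each chart: the preimage of $U_i$ in $\A^{n+1}\smallsetminus\{0\}$ is the open subscheme $\{x_i\neq 0\}$, and the map $(a_0,\dots,a_n)\mapsto\bigl((a_0/a_i,\dots,1,\dots,a_n/a_i),\,a_i\bigr)$ yields a $\G$-equivariant isomorphism of this preimage with $U_i\times\G$, where $\G$ acts by translation on the second factor and trivially on $U_i$. The restricted homotopy quotient is therefore $(U_i\times\G)/^h\G$.

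Now I would invoke the standard fact that the bar construction for a free action is contractible onto the quotient. Explicitly, the simplicial presheaf $[m]\mapsto U_i\times\G^{m+1}$ computing $(U_i\times\G)/^h\G$ admits an extra degeneracy (projecting off the last $\G$ factor), which provides a simplicial homotopy between its identity and the composite $U_i\times\G^{m+1}\to U_i\hookrightarrow U_i\times\G^{m+1}$. Hence the induced map $(U_i\times\G)/^h\G\to U_i$ is a weak equivalence in $\sPr(\Sm/S)$, and by construction this map agrees with the restriction of our given map to $U_i$.

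The main bookkeeping step, which I expect to be the only non-formal one, is checking that the trivializations over the $U_i$ are compatible enough for the ``weak equivalence on each piece'' input to the local model structure; the cleanest way is to phrase the argument stalkwise at points of the big Zariski site, where the $\G$-bundle $\A^{n+1}\smallsetminus\{0\}\to\PP^n$ becomes trivial and the extra-degeneracy argument applies verbatim. Granting this, the restrictions over the cover are weak equivalences, so the map is a Zariski local weak equivalence as claimed.
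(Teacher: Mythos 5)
Your proposal is correct and matches the paper's proof in every essential respect: both restrict to the standard affine cover $\{U_i\}$ of $\PP^n$, trivialize the $\G$-torsor $V_i=\pi^{-1}(U_i)\cong U_i\times\G$ equivariantly, and conclude that $(U_i\times\G)/^h\G\to U_i$ is a weak equivalence via an extra degeneracy/retraction of the bar-type simplicial object. The only cosmetic difference is that the paper writes the trivialization as $V_i\cong\G\times\A^n$ rather than $U_i\times\G$ and phrases the contraction as an "evident retraction," while your final paragraph about stalks spells out the Zariski-local reduction that the paper takes for granted.
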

\begin{proof}
Let $\pi:\A^{n+1}\smallsetminus\{0\}\to\PP^n$ be the canonical map, 
fix $0\le i\leq n$ and consider the standard affine open $U_i:=\{ [x_0:\ldots:x_n]\, |\, x_i\neq 0\}\subseteq\PP^n$ and its pullback along $\pi$, 
i.e.~$V_i:=\pi^{-1}(U_i)\subseteq \A^{n+1}\smallsetminus\{0\}$.
Working Zariski locally, 
it suffices to see that $\pi$ induces a weak equivalence $V_i/^h \G\to U_i$. 
There is an $\G$-equivariant isomorphism $V_i\cong\G\times\A^n$ with $\G$ acting trivially on $\A^n$ and by multiplication on $\G$.
It remains to remark that $(\G\times\A^n)/^h \G\cong \A^n$ because the simplicial diagram defining $(\G\times\A^n)/^h \G$ admits an evident retraction.
\end{proof}

By lemma \ref{lemma:affine-projective} there is a weak equivalence $(\A^\infty \smallsetminus \{0\}) /^h \G\to \PP^\infty$ in $\sPr(\Sm/S)^\Zar$.
Let $\BG$ denote the quotient $\mathrm{pt} /^h\G$ in any of the localizations of $\sPr(\Sm/S)$.
\begin{lemma}
\label{lemma:affine-bgm}
The map 
$$\xymatrix{ 
(\A^\infty \smallsetminus \{0\}) /^h \G\ar[r] & \BG }
$$ 
induced by $\A^\infty \smallsetminus \{0\} \to \mathrm{pt}$ is a weak equivalence in $\sPr(\Sm/S)^{\A^1}$.
\end{lemma}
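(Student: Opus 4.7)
The plan is to reduce the lemma to the statement that $\A^\infty\smallsetminus\{0\}$ is $\A^1$-contractible, after which the claim becomes a formal consequence of the fact that homotopy colimits preserve $\A^1$-local weak equivalences.

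First I would establish $\A^1$-contractibility of $\A^\infty\smallsetminus\{0\}$ by the classical shift trick, adapted to the algebraic setting. Writing $\A^\infty\smallsetminus\{0\}=\colim_n(\A^n\smallsetminus\{0\})$, introduce the shift endomorphism
$$s\colon(x_0,x_1,\ldots)\mapsto(0,x_0,x_1,\ldots)$$
together with the two naive $\A^1$-homotopies $H_t(x)=(1-t)x+ts(x)$, going from $\id$ to $s$, and $K_t(x)=(1-t)s(x)+t\cdot(1,0,0,\ldots)$, going from $s$ to the constant map at $(1,0,0,\ldots)$. In both cases one must verify that the homotopy avoids the origin: for $H_t$, if $x_i$ is the smallest nonvanishing coordinate of $x$, then the coordinates of $H_t(x)$ in positions $i$ and $i+1$ equal $(1-t)x_i$ and $tx_i+(1-t)x_{i+1}$ respectively, and cannot both vanish; for $K_t$ the zeroth coordinate equals $t$, so it is nonzero unless $t=0$, in which case $K_0(x)=s(x)$, which is nonzero because $x$ is. Composing the two homotopies exhibits $\id_{\A^\infty\smallsetminus\{0\}}$ as naively $\A^1$-homotopic to a constant map.

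To conclude the proof of the lemma I would unwind the homotopy quotients as the realizations of the bar-type simplicial objects $[n]\mapsto(\A^\infty\smallsetminus\{0\})\times\G^n$ and $[n]\mapsto\G^n$, so that the map in the statement is, levelwise, the projection $(\A^\infty\smallsetminus\{0\})\times\G^n\to\G^n$. Because $\A^\infty\smallsetminus\{0\}$ is $\A^1$-contractible and the $\A^1$-model structure on $\sPr(\Sm/S)$ is monoidal, each of these projections is an $\A^1$-local weak equivalence. Since the homotopy colimit of a levelwise $\A^1$-equivalence of simplicial objects is an $\A^1$-equivalence, the induced map on realizations is an $\A^1$-local weak equivalence, as required.

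The main obstacle is the careful verification in the first step that the linear homotopies $H_t$ and $K_t$ genuinely miss the origin on $\A^\infty\smallsetminus\{0\}$; everything else is formal manipulation of bar constructions together with standard properties of $\A^1$-localisation.
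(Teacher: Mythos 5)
Your proof is correct and follows the same two-step strategy as the paper: first show that $\A^\infty\smallsetminus\{0\}$ is $\A^1$-contractible, then conclude by passing to the bar-type homotopy quotients and using that homotopy colimits of levelwise $\A^1$-equivalences are $\A^1$-equivalences. Your explicit contraction --- $H_t$ from the identity to the shift map $s$, followed by $K_t$ from $s$ to the constant map at $(1,0,0,\ldots)$ --- is, after expanding the convex combinations, literally the pair of homotopies recorded in the paper's remark immediately after the lemma (there attributed to the referee), so the approaches coincide in detail. The only difference from the paper's primary proof is cosmetic: the paper instead nullhomotopes each inclusion $\A^n\smallsetminus\{0\}\hookrightarrow\A^{n+1}\smallsetminus\{0\}$ via $(t,(x_1,\ldots,x_n))\mapsto((1-t)x_1,\ldots,(1-t)x_n,t)$ and then invokes a compactness (``finite generation'') argument to pass to the colimit, whereas you (and the referee's remark) build a single contraction of the colimit directly. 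Both work; the shift-map version avoids the need to justify the compactness step. One small point worth making explicit: verifying that $H_t$ and $K_t$ land in $\A^{n+1}\smallsetminus\{0\}$ should be reduced to residue fields (a scheme map into $\A^{n+1}$ factors through the open complement of the origin iff it does so fibrewise), after which your ``smallest nonvanishing coordinate'' argument is valid.
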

\begin{proof}
The map $\A^\infty \smallsetminus \{0\} \to \mathrm{pt}$ is an equivalence in $\sPr(\Sm / S)^{\A^1}$:
In effect, 
the inclusions $\A^n \smallsetminus \{0\} \hookrightarrow\A^{n+1} \smallsetminus \{0\}$ are homotopic to a constant map by the elementary 
$\A^1$-homotopy 
$$
\xymatrix{
(t,(x_1,\ldots,x_n)) \ar@{|->}[r] & ((1-t)x_1,\ldots,(1-t)x_n,t). }
$$
A finite generation consideration finishes the proof.
See \cite[Proposition 4.2.3]{MV} for a generalization.
\end{proof}
\begin{remark}
We thank the referee for suggesting that by inserting $t$ in the first coordinate gives a homotopy which is compatible with the 
inclusions $i_n$ of $\A^n \smallsetminus \{0\}$ into $\A^{n+1} \smallsetminus \{0\}$ sending $(x_1,\ldots,x_n)$ to $(x_1,\ldots x_n,0)$
and a contraction of $\A^\infty \smallsetminus \{0\}$ onto the point $(1,0,\dots)$.
To wit, 
the homotopies $H_n:\A^1\times \A^n-\{0\}\to\A^{n+1}-\{0\}, (t,x_1,\ldots,x_n)\mapsto (t,(1-t)x_1,\ldots,(1-t)x_n))$ make the diagrams
\begin{equation*}
\xymatrix{ 
\A^1\times \A^{n}-\{0\} \ar[r]^-{H_n}\ar[d]_{\id\times i_n} & \A^{n+1}-\{0\} \ar[d]^{i_{n+1}}\\
\A^1\times \A^{n+1}-\{0\}\ar[r]^-{H_{n+1}} & \A^{n+2}-\{0\} }
\end{equation*}
commute.
Thus by considering  $\A^{\infty}-\{0\}$ as a presheaf we get a homotopy
\[ 
H:=\mathrm{colim } H_n:\A^1\times\A^{\infty}-\{0\}\to \A^{\infty}-\{0\}.
\]
Note that $H(1,x_1,\ldots)=(1,0,0,\ldots)$ and $H(0,x_1,\ldots)=(0,x_1,x_2,\ldots)$, 
i.e.~this is a homotopy from some constant map on $\A^{\infty}-\{0\}$ to the ``shift map.''
The latter is homotopic to the identity map on $\A^{\infty}-\{0\}$ via the homotopy
\[ 
H'\colon \A^1\times\A^{\infty}-\{0\}\to\A^{\infty}-\{0\}; 
(t,x_1,\ldots)\mapsto ((1-t)x_1,x_2+t(x_1-x_2),\ldots, x_n+t(x_{n-1}-x_n),\ldots).
\]
Note that this map does not pass through the origin.
Moreover, 
$H'(0,-)$ is the identity map and $H'(1,-)$ is the ``shift map.''
\end{remark}

\begin{corollary}
\label{corollary:bgm-projective}
There is a natural weak equivalence 
$$
\BG \simeq \PP^\infty
$$ 
in $\sPr(\Sm/S)^{\Zar,\A^1}$.
\end{corollary}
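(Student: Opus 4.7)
The plan is to splice the two preceding lemmas together via the intermediate object $(\A^\infty\smallsetminus\{0\})/^h\G$. From Lemma \ref{lemma:affine-projective}, applied levelwise in $n$ and then passed to the filtered colimit---as already recorded in the paragraph just before Lemma \ref{lemma:affine-bgm}---one has a Zariski local weak equivalence $(\A^\infty\smallsetminus\{0\})/^h\G \xrightarrow{\simeq} \PP^\infty$ in $\sPr(\Sm/S)^{\Zar}$. From Lemma \ref{lemma:affine-bgm}, the canonical collapse map $(\A^\infty\smallsetminus\{0\})/^h\G \xrightarrow{\simeq} \BG$ is an $\A^1$-local weak equivalence in $\sPr(\Sm/S)^{\A^1}$.

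Both Zariski-local and $\A^1$-local weak equivalences are in particular weak equivalences in the combined model structure $\sPr(\Sm/S)^{\Zar,\A^1}$. Consequently, in the associated homotopy category both legs of the zig-zag
$$
\PP^\infty \xleftarrow{\ \simeq\ } (\A^\infty\smallsetminus\{0\})/^h\G \xrightarrow{\ \simeq\ } \BG
$$
are invertible, and composing with the left-hand inverse produces the desired weak equivalence $\BG \simeq \PP^\infty$. Naturality is automatic because $\G$, the schemes $\A^{n+1}\smallsetminus\{0\}$, the $\PP^n$, and the base point are all defined over $\Spec(\Z)$, so that the entire zig-zag is pulled back from a universal construction and is in particular compatible with base change in $S$.

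Since the substance of the statement has already been absorbed into Lemmas \ref{lemma:affine-projective} and \ref{lemma:affine-bgm}, no real obstacle remains. The only mildly technical point is ensuring that the Zariski-local weak equivalences of Lemma \ref{lemma:affine-projective} survive the filtered colimit over $n$ along the natural inclusions; this is standard, since filtered colimits of Zariski-local weak equivalences of simplicial presheaves are Zariski-local weak equivalences, and it is precisely what legitimises the passage from finite $n$ to $\infty$ quoted above.
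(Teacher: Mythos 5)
Your proof is correct and takes essentially the same route as the paper's: combine the zig-zag $\PP^\infty \leftarrow (\A^\infty\smallsetminus\{0\})/^h\G \to \BG$ from Lemmas \ref{lemma:affine-projective} and \ref{lemma:affine-bgm}, noting that both legs become weak equivalences in the combined localization. The extra remarks on the filtered colimit and on naturality are harmless elaborations of points the paper leaves implicit.
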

\begin{proof}
Combine the equivalences $(\A^\infty \smallsetminus \{0\}) /^h \G \to \PP^\infty$ and $(\A^\infty \smallsetminus \{0\}) /^h \G \to \BG$ 
from Lemmas \ref{lemma:affine-projective} and \ref{lemma:affine-bgm}.
\end{proof}

\begin{lemma}
\label{lemma:bgm-pic}
There is a natural weak equivalence 
$$
\BG \simeq \nu \uPic
$$ 
in $\sPr(\Sm/S)^\Zar$.
\end{lemma}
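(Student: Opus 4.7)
The plan is to construct a natural comparison map $\Phi\colon \BG \to \nu\uPic$ of simplicial presheaves on $\Sm/S$ and verify that it becomes a weak equivalence after Zariski localization, which amounts to checking it on sections over $\Spec R$ for every local ring $R$.

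To construct $\Phi$, note that the trivial line bundle gives a canonical functorial choice of object $\mathcal{O}_X \in \uPic(X)$ for $X \in \Sm/S$, producing a natural transformation $\mathrm{pt} \to \nu\uPic$ landing in the $0$-skeleton. The automorphism sheaf of $\mathcal{O}$ in a strictification of $\uPic$ is canonically $\G$, and this identification extends the preceding map to a morphism from the bar construction computing $\mathrm{pt}/^h\G$ to $\nu\uPic$: at simplicial level $n$, an element $(g_1,\dots,g_n)\in\G(X)^n$ is sent to the composable string
\[ \mathcal{O}_X \xrightarrow{g_1} \mathcal{O}_X \xrightarrow{g_2} \cdots \xrightarrow{g_n} \mathcal{O}_X \]
of automorphisms of the trivial line bundle, viewed as an $n$-simplex of $\nu\uPic(X)$. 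Passing to realizations yields the desired natural map $\Phi$.

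For the local equivalence, fix a local ring $R$. Since $\Pic(R)=0$, every line bundle on $\Spec R$ is isomorphic to $\mathcal{O}$, so the groupoid $\uPic(\Spec R)$ is connected with automorphism group $\G(\Spec R)=R^{\times}$ at the object $\mathcal{O}$. The standard fact that the nerve of a connected groupoid with automorphism group $A$ is a model for the classifying space $BA$ gives $\nu\uPic(\Spec R) \simeq BR^\times$. On the other hand, $\BG(\Spec R)$ is computed by the usual bar construction on the discrete group $R^\times$, which is another model for $BR^\times$, and $\Phi(\Spec R)$ identifies these two presentations by construction.

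The genuinely delicate point is the strictification needed to make $\nu\uPic$ an honest simplicial presheaf rather than a pseudofunctor, together with checking that $\Phi$ is strict; once this is arranged, the stalkwise computation is a routine comparison of two standard models for $BR^{\times}$. Observe that no $\A^1$-invariance is used here, as expected for a statement in $\sPr(\Sm/S)^\Zar$; the passage to the $\A^1$-local world is deferred to Proposition \ref{proposition:picpinfty}(ii), where one combines this lemma with Corollary \ref{corollary:bgm-projective}.
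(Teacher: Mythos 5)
Your approach is essentially the paper's: the paper introduces the subpresheaf of groupoids $\uPic' \subset \uPic$ consisting objectwise of just the trivial line bundle, observes that $\nu\uPic'$ is a model for $\BG$ (which is exactly your explicit bar-construction map $\Phi$ landing in the trivial-bundle component), and then checks that $\nu\uPic' \hookrightarrow \nu\uPic$ is a Zariski local weak equivalence by looking at stalks, where triviality of $\Pic$ over local rings is what makes the inclusion of groupoids an equivalence. Your proposal simply spells out the bar construction and the stalkwise comparison in more detail; the one presentational advantage of the paper's route is that working with the literal subpresheaf $\uPic'$ of (strictified) $\uPic$ sidesteps the strictness-of-$\Phi$ concern you flag at the end.
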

\begin{proof}
Denote by $\uPic' \subset \uPic$ the subpresheaf of groupoids consisting objectwise of the single object formed by the trivial line bundle. 
Then $\nu \uPic'$ provides a model for $\BG$ in $\sPr(\Sm/S)$. 
By considering stalks it follows that $\nu \uPic' \to \nu \uPic$ is a Zariski local weak equivalence. 
\end{proof}

The fact that $\uPic$ is a stack in the flat topology which is an instance of faithfully flat descent (for projective rank one modules) 
implies  
\begin{equation}
\label{descentofpic}
\nu \uPic\mbox{ satisfies Zariski descent.}
\end{equation}
Indeed, 
$\nu$ is a right Quillen functor for the local model structure on groupoid valued presheaves, 
so it preserves fibrant objects.
Note that $\nu \uPic$ satisfies descent in any topology which is coarser than the flat one.
In general, 
$\nu \uPic$ is not $\A^1$-local.

\begin{proof}[Proof (of Proposition \ref{proposition:picpinfty})]
Combining Corollary \ref{corollary:bgm-projective} and Lemma \ref{lemma:bgm-pic} yields $(ii)$.
We define the natural transformation $\cc$ in part $(i)$ using the composition
\begin{align*}
\Pic(X) 
& =\pi_0\nu\uPic(X)\\
& \stackrel{(\ref{descentofpic})}{\cong} \Hom_{\Ho(\sPr(\Sm/S)^\Zar)}(X,\BG)\\
& \to \Hom_{\H(S)}(X,\BG) \\ 
& \cong \Hom_{\H(S)}(X,\PP^\infty).
\end{align*}
If $S$ is regular, 
then $\nu\uPic$ is $\A^1$-local, 
and hence the composition is an isomorphism.

The triangle in $(i)$ commutes since the map $\PP^\infty \to \BG$ used to construct the transformation $\cc$ 
classifies the tautological line bundle on $\PP^\infty$.
\end{proof}

Let $\Th(\mathcal{T}(1))$ denote the Thom space of the tautological vector bundle $\mathcal{T}(1)\equiv\mathcal{O}_{\PP^{\infty}}(-1)$ 
with fiber $\AA^{1}$ over the base point $S\hookrightarrow\PP^{\infty}$.
We recall the notion of oriented motivic ring spectra formulated in \cite{PPR}, 
cf.~\cite{HK}, \cite{Morel:basicproperties} and \cite{Vezzosi}.
The unit of a commutative,
i.e.~a commutative monoid in $\SH(S)$, 
$\PP^{1}$-ring spectrum $\EE$ defines $1\in\EE^{0,0}(S_+)$.
Applying the $\PP^{1}$-suspension isomorphism to $1$ yields the element $\Sigma_{\PP^{1}}(1)\in\EE^{2,1}(\PP^{1},\infty)$.
The canonical covering of $\PP^{1}$ defines motivic weak equivalences
\[ \xymatrix{\PP^{1} \ar[r]^-\sim & \PP^{1}/\AA^{1} &
\TT \equiv \AA^{1}/\AA^{1}\smallsetminus \lbrace 0 \rbrace \ar[l]_-\sim} \]
of pointed motivic spaces inducing isomorphisms
$\EE^{**}(\PP^{1},\infty) \leftarrow \EE^{**}(\AA^{1}/\AA^{1}\smallsetminus\{0\})\rightarrow\EE^{**}(\TT)$.
Let $\Sigma_{\TT}(1)$ be the image of $\Sigma_{\PP^{1}}(1)$ in $\EE^{2,1}(\TT)$.
\begin{definition}
\label{OrientationViaThom}
Let $\EE$ be a commutative $\PP^{1}$-ring spectrum.
\begin{enumerate}[(i)]
\item
A Chern orientation on $\EE$ is a class $\chclass\in\EE^{2,1}(\PP^{\infty})$ such that $\chclass|_{\PP^{1}}= - \Sigma_{\PP^{1}}(1)$.
\item
A Thom orientation on $\EE$ is a class $\thclass \in \EE^{2,1}(\Th(\mathcal T(1))$ such that its restriction to the Thom space 
of the fiber over the base point coincides with $\Sigma_{\TT}(1)\in\EE^{2,1}(\TT)$.
\item
An orientation $\sigma$ on $\EE$ is either a Chern orientation or a Thom orientation.
A Chern orientation $\chclass$ coincides with a Thom orientation $\thclass$ if $\chclass=z^*(\thclass)$. 
\end{enumerate}
\end{definition}
\begin{example}
Let $u_1\colon\Sigma^{\infty}_{\PP^{1}}(\Th(\mathcal{T}(1)))(-1) \to \MGL$ be the canonical map of $\PP^{1}$-spectra. 
The $-1$ shift refers to $\PP^{1}$-loops.
Set $\thclass^{\MGL}\equiv u_1\in \MGL^{2,1}(\Th(\mathcal{T}(1)))$.
Since $\thclass^{\MGL}|_{\Th(\mathbf{1})}= \Sigma_{\PP^{1}}(1)\in\MGL^{2,1}(\Th(\mathbf{1}))$,
the class $\thclass^{\MGL}$ is an orientation on $\MGL$.
\end{example}

Using Proposition \ref{proposition:picpinfty} we are now ready to define the first Chern classes.
\begin{definition}
Suppose $(\EE,\sigma)$ is an oriented motivic ring spectrum in $\SH(S)$, 
$X\in\Sm/S$ and $\caL$ a line bundle on $X$. 
Then the first Chern class of $\caL$ for the given orientation is defined as $\cc_{\caL}\equiv\cc_X(\caL)^*(\sigma)\in\EE^{2,1}(X)$.
\end{definition}

It is clear that this definition recovers the previously considered construction in case $S$ is regular.

The following key result and its proof due to Morel \cite{Morel:basicproperties} are included for completeness.
\begin{lemma}
\label{lemma:morel}
For every $n\ge 1$ there is a canonical weak equivalence $\PP^n/\PP^{n-1} \cong (\PP^1)^{\wedge n}$ such that the naturally induced diagram
\begin{equation*}
\xymatrix{ 
\PP^{n}\ar[d]\ar[r]^-{\Delta^{\wedge n}} & (\PP^{n})^{\wedge n} \\
\PP^{n}/\PP^{n-1}\ar[r]^-{\cong} & (\PP^{1})^{\wedge n}\ar[u] } 
\end{equation*}
commutes in $\H(S)$.
\end{lemma}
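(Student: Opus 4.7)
The plan is to construct the weak equivalence $\PP^n/\PP^{n-1} \simeq (\PP^1)^{\wedge n}$ by a two-step motivic computation using standard distinguished squares, and then verify the diagonal compatibility by an explicit $\AA^1$-homotopy on the affine chart. Throughout I take $\PP^{n-1} = \{x_0 = 0\}$ and the basepoint of $\PP^n$ inside $\PP^{n-1}$, so that the collapse map $\PP^n \to \PP^n/\PP^{n-1}$ is pointed.

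For the equivalence, set $U = \{x_0 \neq 0\} \cong \AA^n$ and $V = \PP^n \smallsetminus \{[1:0:\cdots:0]\}$. The canonical projection $V \to \PP^{n-1}$ is the complement of the zero section of the tautological line bundle, hence an $\AA^1$-weak equivalence, while $U \cap V \cong \AA^n \smallsetminus \{0\}$. The Zariski square $U, V \rightrightarrows \PP^n$ is therefore elementary distinguished and yields $\PP^n/\PP^{n-1} \simeq \AA^n/(\AA^n \smallsetminus \{0\})$ in $\H(S)$. The same argument applied at $n = 1$ gives $\PP^1 \simeq \AA^1/\G$. One then checks that the subspace of $\AA^n$ collapsed in the iterated smash $(\AA^1/\G)^{\wedge n}$ equals $\bigcup_{i=1}^n \AA^1 \times \cdots \times \G \times \cdots \times \AA^1 = \AA^n \smallsetminus \{0\}$, giving $\AA^n/(\AA^n \smallsetminus \{0\}) \cong (\AA^1/\G)^{\wedge n} \simeq (\PP^1)^{\wedge n}$.

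For the commutativity, I would take the right-hand vertical to be the smash of coordinate inclusions $\PP^1 \hookrightarrow \PP^n$, whose $i$-th factor is $[y_0 : y_1] \mapsto [y_0 : 0 : \cdots : y_1 : \cdots : 0]$ with $y_1$ in position $i$. On the chart $U$, the lower composite sends $[1 : x_1 : \cdots : x_n]$ to $\bigwedge_{i=1}^n [1 : 0 : \cdots : x_i : \cdots : 0]$, while $\Delta^{\wedge n}$ sends it to $\bigwedge_{i=1}^n [1 : x_1 : \cdots : x_n]$; these are interpolated factor-wise by the $\AA^1$-homotopy
\[ H_i(t, [x_0 : x_1 : \cdots : x_n]) = [x_0 : t x_1 : \cdots : t x_{i-1} : x_i : t x_{i+1} : \cdots : t x_n]. \]

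The main obstacle will be extending the combined homotopy $\bigwedge_i H_i$ across $\PP^{n-1}$ to a globally defined, pointed morphism $\AA^1 \times \PP^n \to (\PP^n)^{\wedge n}$, since the projective formula becomes indeterminate at $t = 0$ on the codimension-two locus $\{x_0 = x_i = 0\}$. I would resolve this by working Zariski-locally on the cover $\PP^n = U \cup V$ of Step 1: use the explicit formula on $U$, verify on $V \simeq \PP^{n-1}$ that $\Delta^{\wedge n}|_V$ is $\AA^1$-nullhomotopic in $(\PP^n)^{\wedge n}$ (by scaling a complementary coordinate in each factor of the smash to move $\Delta(\PP^{n-1})$ into the fat wedge, which collapses to the basepoint after smashing), and then patch the two homotopies over $U \cap V$. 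This yields the desired commutativity in $\H(S)$.
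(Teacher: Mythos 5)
Your construction of the equivalence $\PP^n/\PP^{n-1}\simeq \AA^n/(\AA^n\smallsetminus\{0\})\simeq(\PP^1)^{\wedge n}$ in the first two steps is sound and is essentially the same as the paper's, which identifies $\PP^n/\bigcup_{1\le i\le n}U_i\cong U_0/(U_0\cap\bigcup_{i\ge 1}U_i)=\AA^n/(\AA^n\smallsetminus\{0\})$ by excision, using that $\bigcup_{i\ge 1}U_i$ is the total space of a line bundle over $\PP^{n-1}$.

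The commutativity step, however, has a genuine gap. You correctly observe that the formula $H_i(t,[x_0:\cdots:x_n])=[x_0:tx_1:\cdots:x_i:\cdots:tx_n]$ is undefined at $t=0$ along $\{x_0=x_i=0\}$, but the proposed fix---one homotopy on $U$, a second on $V$, then ``patch over $U\cap V$''---is not a valid operation. $\AA^1$-homotopies between two fixed maps do not satisfy Zariski descent: from a homotopy over $U$ and a homotopy over $V$ you cannot produce one over $U\cup V$ unless the two already agree as honest morphisms $\AA^1\times(U\cap V)\to(\PP^n)^{\wedge n}$, which your construction does not arrange; and $U\cap V\cong\AA^n\smallsetminus\{0\}$ is far from $\AA^1$-contractible, so the Mayer--Vietoris obstruction to gluing is real. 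The paper avoids constructing any homotopy by a strict factorization argument: it replaces $(\PP^n)^{\wedge n}$ with the weakly equivalent presheaf $(\PP^n/U_1)\wedge\cdots\wedge(\PP^n/U_n)$ (each $U_i\cong\AA^n$ is contractible, so the quotients are level weak equivalences), notes that the composite $\PP^n\xrightarrow{\Delta^{\wedge n}}(\PP^n)^{\wedge n}\to(\PP^n/U_1)\wedge\cdots\wedge(\PP^n/U_n)$ factors \emph{on the nose} through $\PP^n/\bigcup_{1\le i\le n}U_i$ because the diagonal of $U_i$ lands in the $i$th collapsed factor, identifies that quotient strictly with $(\AA^1/\AA^1\smallsetminus\{0\})^{\wedge n}$, and checks the inclusion $(\PP^1)^{\wedge n}\hookrightarrow(\PP^n)^{\wedge n}$ induces the same map into the common target. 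Every comparison is then between actual morphisms of presheaves, so no gluing of homotopies is needed. To repair your argument, replace the ``patching'' step by this factorization through $(\PP^n/U_1)\wedge\cdots\wedge(\PP^n/U_n)$.
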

\begin{proof}
For $0\leq i\leq n$,
let $U_{i}$ be the open affine subscheme of $\PP^{n}$ of points $[x_0,\dots,x_n]$ such that $x_{i}\neq 0$, 
set $\Omega_i\equiv(\PP^n)^{i-1}\times U_{i}\times(\PP^n)^{n-i}$ and $\Omega\equiv\cup_{1\leq i\leq n}\Omega_{i}
\subseteq (\PP^n)^n$.
The projection $(\PP^{n})^{n}\rightarrow (\PP^{n})^{n}/\Omega$ induces a motivic weak equivalence
$(\PP^{n})^{\wedge n}\rightarrow (\PP^{n}/U_{1})\wedge\dots\wedge (\PP^{n}/U_{n})$.
It allows to replace $(\PP^{n})^{\wedge n}$ by the weakly equivalent smash product of the motivic spaces 
$\PP^{n}/U_{i}$ for $1\leq i\leq n$.
Note that $\PP^{n}\rightarrow (\PP^{n}/U_{1})\wedge\dots\wedge (\PP^{n}/U_{n})$ factors through 
$\PP^{n}/\cup_{1\leq i\leq n}U_{i}$ and the inclusion $\PP^{n-1}\subseteq\cup_{1\leq i\leq n}U_{i}$ is the zero 
section of the canonical line bundle over $\PP^{n-1}$.
Hence the latter map is a motivic weak equivalence and $\PP^{n}\rightarrow\PP^{n}/\cup_{1\leq i\leq n}U_{i}$
induces a motivic weak equivalence $\PP^{n}/\PP^{n-1}\rightarrow\PP^{n}/\cup_{1\leq i\leq n}U_{i}$.
The inclusion $U_{0}\subseteq\PP^{n}$ induces an isomorphism of pointed motivic spaces 
$\AA^{n}/\AA^{n}\smallsetminus\{0\}\cong\PP^{n}/\cup_{1\leq i\leq n}U_{i}$.
Since there are canonical isomorphisms 
$\AA^{n}/\AA^{n}\smallsetminus\{0\}\cong (\AA^{1}/\AA^{1}\smallsetminus\{0\})^{\wedge n}$ and 
$\AA^{1}/\AA^{1}\smallsetminus\{0\}\cong \PP^{1}/\AA^{1}$ there is an induced map
$(\PP^{1}/\AA^{1})^{\wedge n}\rightarrow (\PP^{n}/U_{1})\wedge\dots\wedge (\PP^{n}/U_{n})$ weakly equivalent to
$(\PP^{1})^{\wedge n}\rightarrow (\PP^{n}/U_{1})\wedge\dots\wedge (\PP^{n}/U_{n})$.
\end{proof}

Next we state and prove the projective bundle theorem for oriented motivic ring spectra, 
cf.~the computation for Grassmannians in \cite[Proposition 6.1]{NSO}.
\begin{theorem}
\label{theorem:Ecohomologyofprojectivebundles}
Suppose $\EE$ is an oriented motivic ring spectrum,
$\xi\colon Y\rightarrow X$ a rank $n+1$ vector bundle over a smooth $S$-scheme $X$,
$\PP(\xi)\colon \PP(Y)\rightarrow X$ its projectivization and $\cc_\xi\in\EE^{2,1}(\PP(\xi))$
the first Chern class of the tautological line bundle.
\begin{itemize}
\item[(i)]  Then $\EE^{\ast,\ast}(\PP(\xi))$ is a free $\EE^{*,*}(X)$-module on generators $1=\cc_\xi^0,\cc_\xi,\ldots,\cc_\xi^n$.
\item[(ii)] If $\xi$ is trivial, then there is an isomorphism of $\EE^{*,*}(X)$-algebras
$$
\EE^{\ast,\ast}(\PP(\xi))\cong
\EE^{*,*}(X)[\cc_\xi]/(\cc_\xi^{n+1}).
$$
\end{itemize}
\end{theorem}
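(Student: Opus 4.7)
The plan is to prove $(ii)$ first by induction on $n$, then deduce $(i)$ from it by a Mayer-Vietoris reduction to the trivial case.

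For $(ii)$, write $\PP(\xi) = X \times \PP^n$, so that $\cc_\xi$ is pulled back from the first Chern class on $\PP^n$ attached to the orientation. The case $n=0$ is vacuous, so assume $n \geq 1$ and that the result holds for $n-1$. Smashing the cofiber sequence $\PP^{n-1}_+ \to \PP^n_+ \to \PP^n/\PP^{n-1}$ with $X_+$ and applying $\EE^{*,*}$ produces a long exact sequence
\begin{equation*}
\cdots \to \EE^{*-1,*}(X \times \PP^{n-1}) \to \tilde{\EE}^{*,*}\bigl(X_+ \wedge \PP^n/\PP^{n-1}\bigr) \to \EE^{*,*}(X \times \PP^n) \to \EE^{*,*}(X \times \PP^{n-1}) \to \cdots
\end{equation*}
By Lemma \ref{lemma:morel}, $\PP^n/\PP^{n-1} \simeq (\PP^1)^{\wedge n}$, and iterating the $\PP^1$-suspension isomorphism identifies the second term with $\EE^{*-2n,*-n}(X)$. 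The restriction map onto $\EE^{*,*}(X \times \PP^{n-1})$ is surjective since, by induction, the target is free on $1,\cc_\xi,\ldots,\cc_\xi^{n-1}$ and each generator lifts by naturality of the Chern class along $\PP^{n-1} \hookrightarrow \PP^n$. The long exact sequence therefore splits into short exact sequences. To complete the induction one checks that the boundary-free part $\EE^{*-2n,*-n}(X) \to \EE^{*,*}(X \times \PP^n)$ sends $1$ to a unit multiple of $\cc_\xi^n$, which is exactly the content of Lemma \ref{lemma:morel} applied together with the normalization $\chclass|_{\PP^1} = -\Sigma_{\PP^1}(1)$ from Definition \ref{OrientationViaThom}.

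For $(i)$, choose a finite Zariski cover $X = U_1 \cup \cdots \cup U_k$ trivializing $\xi$, available since $X$ is noetherian. The statement to prove is that the $\EE^{*,*}(X)$-linear map
\begin{equation*}
\Phi_X \colon \bigoplus_{i=0}^n \EE^{*-2i,*-i}(X) \longrightarrow \EE^{*,*}(\PP(\xi)),\qquad (a_0,\ldots,a_n) \longmapsto \sum_{i=0}^n \PP(\xi)^*(a_i) \cdot \cc_\xi^i,
\end{equation*}
is an isomorphism. Proceed by induction on $k$. The case $k=1$ is $(ii)$. For the inductive step, set $U = U_1$ and $V = U_2 \cup \cdots \cup U_k$, so that $V$ and $U \cap V$ are each covered by at most $k-1$ trivializing opens. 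Since $\EE$ is representable in $\SH(S)$ the cohomology theory $\EE^{*,*}$ satisfies Nisnevich descent, and the Mayer-Vietoris sequences for $(U,V)$ on $X$ and for the pulled back cover on $\PP(\xi)$ assemble, by naturality of $\Phi$ in the base, into a ladder to which the five lemma applies.

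The main obstacle will be the compatibility step in $(ii)$: one must show that the canonical generator of $\tilde{\EE}^{*,*}(X_+ \wedge (\PP^1)^{\wedge n})$, transported back along $X \times \PP^n \to X_+ \wedge \PP^n/\PP^{n-1}$, is precisely the cup-power $\cc_\xi^n$ (up to a sign dictated by the orientation normalization). Lemma \ref{lemma:morel} is tailored exactly for this task, identifying the composition $\PP^n \xrightarrow{\Delta^{\wedge n}} (\PP^n)^{\wedge n} \to (\PP^1)^{\wedge n}$, which computes $\cc_\xi^n$ by definition of the product in $\EE$-cohomology, with the quotient map $\PP^n \to \PP^n/\PP^{n-1}$ followed by the Morel equivalence. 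Once this identification is established, the remaining homological algebra and the Mayer-Vietoris ladder are routine.
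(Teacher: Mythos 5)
Your proposal is correct and takes essentially the same route as the paper: part (ii) by induction on rank via the cofiber sequence $X\times\PP^{n-1}\to X\times\PP^n\to X_+\wedge\PP^n/\PP^{n-1}$, with Lemma~\ref{lemma:morel} supplying the identification of the Gysin term with $\cc_\xi^n$, and part (i) via a Mayer--Vietoris/five-lemma induction over a trivializing Zariski cover. The only step you leave implicit is the relation $\cc_\xi^{n+1}=0$ needed for the full algebra isomorphism in (ii), which the paper extracts from exactness of the same long exact sequence.
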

\begin{proof}
The claimed isomorphism is given explicitly by 
\begin{equation*}
\xymatrix{
\bigoplus_{i=0}^{n}
\EE^{\ast-2i,\ast-i}(X)\ar[r]^-{\cong} & \EE^{\ast,\ast}(\PP(\xi)); \\
(\x_{0},\dots,\x_{n})\ar@{|->}[r] & \Sigma_{i=0}^{n} (\PP(\xi))^{\ast}(\x_{i})\cc_{\xi}^{i}. }
\end{equation*}
Combining the choice of a trivialization of the vector bundle $\xi$ with a Mayer-Vietoris induction argument shows that $(i)$ follows from $(ii)$. 
Part $(ii)$ is proved by induction on the rank of $\xi$.

Assume the map is an isomorphism for the trivial vector bundle $Y\rightarrow X$ of rank $n$,
and form $Y\oplus\mathcal{O}_{X}\rightarrow X$ of rank $n+1$.
Next form the cofiber sequence 
$\PP(Y)\rightarrow\PP(Y\oplus\mathcal{O}_{X})\rightarrow (\TT_{X}\equiv\A^{1}_{X}/\A^{1}_{X}\smallsetminus\{0\})^{\wedge\, n}\wedge X_{+}$ 
and consider the induced diagram in $\EE$-cohomology:
\begin{equation*}
\xymatrix{
\dots\ar[r] &
\EE^{\ast-2n,\ast-n}(X_{+})\ar[r]\ar[d] &
\bigoplus_{i=0}^{n}\EE^{\ast-2i,\ast-i}(X_{+})\ar[r]\ar[d] &
\bigoplus_{i=0}^{n-1}\EE^{\ast-2i,\ast-i}(X_{+})\ar[r]\ar[d] &
\dots\\
\dots\ar[r] &
\EE^{\ast,\ast}((\TT_{X})^{\wedge n}\wedge X_{+})\ar[r] &
\EE^{\ast,\ast}(\PP(Y\oplus\mathcal{O}_{X}))\ar[r] &
\EE^{\ast,\ast}(\PP(Y))\ar[r] &
\dots }
\end{equation*}

The right hand square commutes using functoriality of first Chern classes and the fact that the restriction map is a ring map. 
The commutativity of the left hand square is exactly the explicit computation of the Gysin map and is where Lemma \ref{lemma:morel} 
enters the proof.

The left hand map is an isomorphism since smashing with $\TT_{X}$ is an isomorphism in $\SH(S)$.
Now use the induction hypothesis and apply the $5$-lemma which proves part $(i)$ in this special case. 
The relation $\cc_\xi^{n+1}=0$ follows using exactness of the lower row of the diagram.
\end{proof}

Theorem \ref{theorem:Ecohomologyofprojectivebundles} allows to define the higher Chern class $\cc_{i\xi}\in\EE^{2i,i}(X)$ of $\xi$ as the 
unique solution of the equation 
\begin{equation*} 
\cc_{\xi}^{n}=
\cc_{1\xi}\cc_{\xi}^{n-1}-
\cc_{2\xi}\cc_{\xi}^{n-2}+
\cdots+
(-1)^{n-1}\cc_{n\xi}.
\end{equation*}

With the theory of Chern classes in hand, 
one constructs a theory of Thom classes by the usual script.
In particular, 
as for regular base schemes one verifies the Thom isomorphism theorem:
\begin{theorem}
\label{theorem:thomisomorphism}
If $\xi\colon Y\rightarrow X$ is a rank $n$ vector bundle with zero section $z\colon X\rightarrow Y$ over a smooth $S$-scheme $X$,
cup-product with the Thom class of $\xi$ induces an isomorphism 
\begin{equation*}
\xymatrix{
(-\cup\thclass_{\xi})\circ \xi^{\ast}\colon \EE^{\ast,\ast}(X)\ar[r]^-{\cong} 
& \EE^{\ast+2n,\ast+n}(Y/(Y\smallsetminus z(X))). }
\end{equation*}
\end{theorem}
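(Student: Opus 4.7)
The plan is to reduce the Thom isomorphism to the projective bundle theorem (Theorem \ref{theorem:Ecohomologyofprojectivebundles}) via the standard identification of the Thom space with a quotient of projective bundles, so that no ingredient beyond Chern classes is needed.

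First, I would identify $Y/(Y\smallsetminus z(X))$ with $\PP(\xi\oplus\mathcal{O}_{X})/\PP(\xi)$ in $\H(S)$: view $\PP(\xi\oplus\mathcal{O}_X)$ as the projective compactification of the total space $Y$ with $\PP(\xi)$ sitting as the hyperplane at infinity, and use homotopy purity (or the usual open cover argument) to produce a motivic weak equivalence between the Thom space and this quotient. This yields a cofiber sequence $\PP(\xi)\to\PP(\xi\oplus\mathcal{O}_X)\to\Th(\xi)$ whose long exact sequence in $\EE$-cohomology I would analyze next.

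Second, I would apply Theorem \ref{theorem:Ecohomologyofprojectivebundles} to $\PP(\xi\oplus\mathcal{O}_X)$ (rank $n+1$) and $\PP(\xi)$ (rank $n$). Writing $\cc\equiv\cc_{\xi\oplus\mathcal{O}_X}$ and noting that the restriction map sends $\cc$ to $\cc_\xi$ by functoriality of first Chern classes, the projective bundle theorem exhibits the restriction $\EE^{*,*}(\PP(\xi\oplus\mathcal{O}_X))\to\EE^{*,*}(\PP(\xi))$ as the surjection of free $\EE^{*,*}(X)$-modules killing $\cc^n$ against the relation $\cc_\xi^n=\cc_{1\xi}\cc_\xi^{n-1}-\cdots+(-1)^{n-1}\cc_{n\xi}$. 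Thus the long exact sequence degenerates into short exact sequences, and $\EE^{*+2n,*+n}(\Th(\xi))$ identifies as a free $\EE^{*,*}(X)$-module of rank one on a single class $\thclass_\xi$ lifting the monic polynomial $\cc^n+\sum_{i=1}^n(-1)^i \xi^*(\cc_{i\xi})\cc^{n-i}$ from the kernel.

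Third, I would verify that this $\thclass_\xi$ is the Thom class: its restriction to the Thom space of a fiber is, by the trivial bundle case and Lemma \ref{lemma:morel}, the class $\Sigma_\TT^n(1)$, so this recipe is compatible with Definition \ref{OrientationViaThom} applied fiberwise. The isomorphism of the theorem is then the statement that $\thclass_\xi$ freely generates $\EE^{*,*}(\Th(\xi))$ as an $\EE^{*,*}(X)$-module via $x\mapsto\xi^*(x)\cup\thclass_\xi$, which is immediate once one writes out the isomorphism in the projective bundle theorem in terms of the class $\cc$.

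The main obstacle will be the first step, namely setting up the identification of $Y/(Y\smallsetminus z(X))$ with $\PP(\xi\oplus\mathcal{O}_X)/\PP(\xi)$ in the absence of regularity hypotheses on $S$: one must check that the motivic purity argument (the excision-type equivalence comparing the affine Thom space to its projective version) goes through without regularity. Since $X$ itself is smooth over $S$ and both constructions are made from smooth $S$-schemes glued by Nisnevich covers, this should be harmless, but it needs to be spelled out carefully. Once this is done, the rest is formal from the projective bundle theorem, which is why the paper's summary ``by the usual script'' is justified.
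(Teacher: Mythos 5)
Your proposal is correct and is precisely the ``usual script'' to which the paper alludes: identify $Y/(Y\smallsetminus z(X))$ with $\PP(\xi\oplus\mathcal{O}_X)/\PP(\xi)$ via Zariski excision and $\A^1$-contractibility of the complement of the zero section (neither of which uses regularity of $S$), then apply Theorem~\ref{theorem:Ecohomologyofprojectivebundles} to extract the Thom class as the generator of the kernel of the surjective restriction map. The paper deliberately leaves this argument implicit, so your write-up supplies exactly what is intended.
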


Having dealt with these preparations, 
the $\EE$-cohomology computation of Grassmannians in \cite{NSO} extends to nonregular base schemes.
The other parts involved in the proof of the motivic Landweber exact functor theorem given in \cite{NSO} do not require a regular base scheme.
In conclusion,
the following holds (for additional prerequisites we refer to \cite{NSO}): 
\begin{theorem}
\label{theorem:mleft}
Let $S$ be a noetherian base scheme of finite Krull dimension and $\MMM_{*}$ an Adams graded Landweber exact $\MU_{*}$-module.
Then there exists a motivic spectrum $\EE$ in $\SH(S)_{\mathcal{T}}$ and a natural isomorphism
\begin{equation*}
\EE_{\ast\ast}(-)\cong
\MGL_{**}(-)\otimes_{\MU_{*}}\MMM_{*}
\end{equation*}
of homology theories on $\SH(S)$.
\end{theorem}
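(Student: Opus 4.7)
The plan is to retrace the proof of \cite[Theorem 8.6]{NSO}, which handles the regular case, and to pinpoint the use of regularity so as to replace it with the tools developed in Section \ref{section:Chernclassesandorientedmotivicringspectra}. In \cite{NSO}, regularity enters only through the identification $\Pic(X)\cong\Hom_{\H(S)}(X,\PP^\infty)$, from which first Chern classes of line bundles, and then higher Chern classes, Thom classes, and ultimately the $\EE$-cohomology of projective bundles, Thom spaces, and Grassmannians for an oriented motivic ring spectrum $\EE$ are derived. Proposition \ref{proposition:picpinfty} supplies the natural transformation $\cc_X$ that is all one actually needs in place of that isomorphism.

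First I would invoke the projective bundle theorem (Theorem \ref{theorem:Ecohomologyofprojectivebundles}) and the Thom isomorphism (Theorem \ref{theorem:thomisomorphism}), both established without regularity, and iterate the projective bundle construction along a complete flag tower to compute $\EE^{**}(\Gr_n)$ and, in the colimit, $\EE^{**}(\Gr)$ for an arbitrary oriented motivic ring spectrum $\EE$. This extends \cite[Proposition 6.1]{NSO} to the present setting as a formal consequence of the projective bundle theorem, by the same argument as in the regular case.

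Second, specialising to $\EE=\MGL$ with the canonical orientation $\thclass^{\MGL}$ from the example preceding Theorem \ref{theorem:Ecohomologyofprojectivebundles}, I would compute $\MGL_{**}(\Gr)$ and then $\MGL_{**}(\MGL)$, and verify that $(\MGL_{**},\MGL_{**}\MGL)$ is a flat extension of the topological Hopf algebroid $(\MU_*,\MU_*\MU)$ in the sense used in \cite{NSO}. Given an Adams-graded Landweber exact $\MU_*$-module $\MMM_*$, Landweber exactness then guarantees that the functor $X\mapsto\MGL_{**}(X)\otimes_{\MU_*}\MMM_*$ sends cofibre sequences to long exact sequences and coproducts to direct sums, via the standard filtration by invariant prime ideals of $\MU_*$. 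Since $\SH(S)_{\caT}$ is a Brown category by \cite[Lemma 8.2]{NSO}, Brown representability produces a Tate spectrum $\EE\in\SH(S)_{\caT}$ representing this homology theory, and the constructed natural isomorphism is the desired one.

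The main obstacle, namely the construction of first Chern classes of line bundles in the absence of regularity, has been resolved in Section \ref{section:Chernclassesandorientedmotivicringspectra}. Everything else — the Grassmannian calculation, the comparison with $\MU_*\MU$, and the Brown representability step — is either a formal consequence of the Chern class machinery or independent of any regularity hypothesis, so the proof given in \cite{NSO} applies verbatim in the present generality.
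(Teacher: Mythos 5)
Your proposal is correct and follows essentially the same route as the paper: identify the sole use of regularity in the proof of \cite[Theorem 8.6]{NSO} as the identification $\Pic(X)\cong\Hom_{\H(S)}(X,\PP^\infty)$ underlying the Chern class construction, replace it with the natural transformation $\cc_X$ from Proposition \ref{proposition:picpinfty}, rerun the projective bundle theorem (Theorem \ref{theorem:Ecohomologyofprojectivebundles}) and Thom isomorphism (Theorem \ref{theorem:thomisomorphism}) without regularity, extend the Grassmannian computation, and feed the result back into the flat Hopf-algebroid comparison, Landweber exactness, and Brown representability steps of \cite{NSO}, which are regularity-independent. The paper compresses this into a single paragraph pointing to Section \ref{section:Chernclassesandorientedmotivicringspectra} and deferring to \cite{NSO}; your write-up merely unpacks the same chain of reductions.
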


Moreover,
based on the results above,
the following computations stated for fields in \cite{PPR} hold for $S$.
Let $\Gr$ denote the infinite Grassmannian over $S$.
\begin{theorem}
\label{structure-e_**mgl}
Suppose $\EE$ is an oriented motivic ring spectrum. 
\begin{enumerate}[(i)]
\item 
The canonical map
\begin{equation}
\label{equation:EEtoinverselimit}
\xymatrix{
\EE^{\ast,\ast}(\MGL)\ar[r] & 
\varprojlim\EE^{\ast+2n,\ast+n}(\Th(\mathcal T(n)))=
\EE^{\ast,\ast}[[\cc_1,\cc_2,\cc_3,\dots]] }
\end{equation}
is an isomorphism.
(Here $\cc_i$ is the $i$th Chern class.)
\item 
The canonical map
\begin{equation}
\label{equation:EEMGLMGLtoinverselimit}
\xymatrix{
\EE^{\ast,\ast}(\MGL\wedge\MGL) \ar[r] & 
\varprojlim \EE^{\ast+2n,\ast+n}(\Th(\mathcal T(n))\wedge\Th(\mathcal T(n)))=
\EE^{\ast,\ast}[[\cc^{\prime}_1,\cc^{\prime\prime}_1,\cc^{\prime}_2,\cc^{\prime\prime}_2, \dots]] }
\end{equation}
is an isomorphism.
Here $c^{\prime}_i$ is the $i$th Chern class obtained from projection on the first factor of $\Gr\times\Gr$, 
and likewise $c^{\prime\prime}_i$ is obtained from the second factor.
\end{enumerate}
\end{theorem}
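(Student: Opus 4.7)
The plan is to identify both $\EE^{\ast,\ast}$-cohomologies as inverse limits of cohomologies of Thom spaces of universal bundles over finite Grassmannians, and then to check that the associated $\lim^{1}$ terms vanish. For part (i), recall that $\MGL$ is the $\PP^{1}$-spectrum whose $n$-th level is the Thom space $\Th(\mathcal{T}(n))$ of the universal rank $n$ bundle over the finite Grassmannian $\Gr(n)$, with structure maps induced by the identification $\mathcal{T}(n+1)|_{\Gr(n)}\cong\mathcal{T}(n)\oplus\mathcal{O}$. Writing $\MGL$ as the homotopy colimit of the suitably desuspended Thom spectra $\Sigma^{-2n,-n}\Sigma^{\infty}_{\PP^{1}}\Th(\mathcal{T}(n))$, the Milnor short exact sequence takes the form
\begin{equation*}
0\to {\lim_{n}}^{1}\EE^{\ast-1+2n,\ast+n}(\Th(\mathcal{T}(n)))\to \EE^{\ast,\ast}(\MGL)\to \lim_{n}\EE^{\ast+2n,\ast+n}(\Th(\mathcal{T}(n)))\to 0.
\end{equation*}

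Next I would invoke the Thom isomorphism of Theorem \ref{theorem:thomisomorphism} to replace each $\EE^{\ast+2n,\ast+n}(\Th(\mathcal{T}(n)))$ by $\EE^{\ast,\ast}(\Gr(n))$, and then appeal to the $\EE$-cohomology computation of Grassmannians from \cite{NSO}, now extended to nonregular bases, to conclude that this group equals $\EE^{\ast,\ast}[[\cc_{1},\dots,\cc_{n}]]$ with $\cc_{i}$ the Chern classes of $\mathcal{T}(n)$. The splitting principle, a formal consequence of Theorem \ref{theorem:Ecohomologyofprojectivebundles}, combined with the Whitney sum formula applied to $\mathcal{T}(n+1)|_{\Gr(n)}\cong\mathcal{T}(n)\oplus\mathcal{O}$, identifies the transition maps with the canonical surjections $\EE^{\ast,\ast}[[\cc_{1},\dots,\cc_{n+1}]]\to\EE^{\ast,\ast}[[\cc_{1},\dots,\cc_{n}]]$ sending $\cc_{n+1}$ to $0$ and fixing the other variables. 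Surjectivity gives Mittag-Leffler, hence $\lim^{1}=0$ and the inverse limit is $\EE^{\ast,\ast}[[\cc_{1},\cc_{2},\dots]]$; a naturality check then matches the resulting isomorphism with the map in (\ref{equation:EEtoinverselimit}).

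For part (ii) the same template applies to $\MGL\wedge\MGL$, whose smash of level-$n$ spaces is $\Th(\mathcal{T}(n))\wedge\Th(\mathcal{T}(n))\cong \Th(\mathcal{T}(n)\boxtimes\mathcal{T}(n))$ over $\Gr(n)\times\Gr(n)$. A double Thom isomorphism together with a K\"unneth-type identification of $\EE^{\ast,\ast}(\Gr(n)\times\Gr(n))$ with $\EE^{\ast,\ast}[[\cc^{\prime}_{1},\dots,\cc^{\prime}_{n},\cc^{\prime\prime}_{1},\dots,\cc^{\prime\prime}_{n}]]$ then feeds into an analogous Mittag-Leffler argument in the two indices, the transition maps again being surjective because they kill top Chern classes on each factor separately.

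The hard part will be the K\"unneth identification for $\EE^{\ast,\ast}(\Gr(n)\times\Gr(n))$. The cleanest route I see is to run the projective bundle argument underlying Theorem \ref{theorem:Ecohomologyofprojectivebundles} one Grassmannian factor at a time, exploiting the fact that $\EE^{\ast,\ast}(\Gr(n))$ is built by a Mayer-Vietoris induction from free $\EE^{\ast,\ast}$-modules, so that no higher Tor corrections appear in the external product. One then has to verify that completion in the two sets of Chern class variables commutes with the Thom isomorphism in the other factor and with passage to the limit as $n\to\infty$; this is again a Mittag-Leffler check, now in a double index, and is where the bookkeeping is most delicate.
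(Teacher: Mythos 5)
The paper does not give its own proof of this theorem; it simply states that the computations of \cite{PPR} carry over once the Thom isomorphism (Theorem \ref{theorem:thomisomorphism}), the projective bundle theorem (Theorem \ref{theorem:Ecohomologyofprojectivebundles}), and the $\EE$-cohomology of Grassmannians have been established over the general base. Your proposal correctly reconstructs that deferred argument: the Milnor $\lim^{1}$-sequence for $\MGL\simeq\colim_n\Sigma^{-2n,-n}\Sigma^{\infty}_{\PP^1}\Th(\mathcal T(n))$, the Thom isomorphism reducing to $\EE^{**}(\Gr(n))\cong\EE^{**}[[\cc_1,\dots,\cc_n]]$, surjectivity of the transition maps (via the Whitney formula for $\mathcal T(n+1)|_{\Gr(n)}\cong\mathcal T(n)\oplus\mathcal O$) giving Mittag-Leffler, and the external-product/K\"unneth step for part (ii), which is indeed the delicate point and is handled exactly as you suggest by running the Grassmannian computation relative to the other factor. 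This is the same approach as the paper's reference, so no substantive comparison is needed; just note that $\Gr(n)$ here is the infinite Grassmannian of $n$-planes (its $\EE$-cohomology is a power series ring, obtained as a further inverse limit over finite Grassmannians $\Gr(n,N)$, where the Mittag-Leffler check also enters), and that the bundle whose Thom space is $\Th(\mathcal T(n))\wedge\Th(\mathcal T(n))$ is the external direct sum $p_1^*\mathcal T(n)\oplus p_2^*\mathcal T(n)$ over $\Gr(n)\times\Gr(n)$.
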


\section{The universal property of $\MGL$}
In this section we observe, 
with no claims of originality other than for the cohomology computations in Theorem \ref{structure-e_**mgl} and the Thom isomorphism 
Theorem \ref{theorem:thomisomorphism},
that the proof of the universal property of $\MGL$ given by Panin-Pimenov-R{\"o}ndigs in \cite{PPR} goes through for noetherian 
base schemes of finite Krull dimension.

\begin{theorem}
\label{theorem:UniversalityTheorem}
Suppose $\EE$ is a commutative $\PP^{1}$-ring spectrum.
Then the assignment 
$\varphi\mapsto\varphi(th^{\MGL})\in\EE^{2,1}(\Th(\mathcal T(1)))$
identifies the set of monoid maps $\varphi\colon \MGL \to \EE$
in $\SH(S)$ with the set of orientations on $\EE$.
Its inverse sends $\thclass\in\EE^{2,1}(\Th(\mathcal T(1)))$ to the unique map
$\varphi \in \EE^{0,0}(\MGL)=\Hom_{\SH(S)}(\MGL,\EE)$ such that 
$u_i^\ast(\varphi)=\thclass(\mathcal T(i))$ in $\EE^{2i,i}(\Th(\mathcal T(i)))$,
where $u_i$ denotes the canonical map $u_i\colon \Sigma^{\infty}_{\PP^{1}}(\Th(\mathcal T(i)))(-i) \to \MGL$.
\end{theorem}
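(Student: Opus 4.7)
The plan is to follow the Panin--Pimenov--R\"ondigs strategy of \cite{PPR}; all of the ingredients needed for their argument now hold over an arbitrary noetherian base $S$ of finite Krull dimension, thanks to the Chern class construction of Section \ref{section:Chernclassesandorientedmotivicringspectra} and the cohomology computations of Theorem \ref{structure-e_**mgl}. The forward assignment $\varphi\mapsto\varphi(\thclass^{\MGL})$ is manifestly well-defined, so all of the content lies in exhibiting a well-defined inverse and checking it produces a monoid map.

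\textbf{Construction of the inverse.} Starting from an orientation $\thclass\in\EE^{2,1}(\Th(\mathcal{T}(1)))$, the Chern class machinery and Theorem \ref{theorem:thomisomorphism} produce a Thom class $\thclass(\xi)\in\EE^{2n,n}(\Th(\xi))$ for every rank-$n$ vector bundle $\xi$ on every $X\in\Sm/S$, which is natural, is normalized by $\thclass(\mathbf{1})=\Sigma_{\PP^1}(1)$ and satisfies the multiplicativity $\thclass(\xi\oplus\eta)=\thclass(\xi)\cup\thclass(\eta)$. Applied to the tautological bundles $\mathcal{T}(n)$ over the infinite Grassmannian, and combined with the identifications $\mathcal{T}(n)\oplus\mathbf{1}\cong\mathcal{T}(n+1)|_{\Gr_n}$ realizing the bonding maps of $\MGL$, these classes assemble into a compatible system in $\varprojlim\EE^{*+2n,*+n}(\Th(\mathcal{T}(n)))$. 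By the isomorphism (\ref{equation:EEtoinverselimit}) of Theorem \ref{structure-e_**mgl}(i), the system corresponds to a unique class $\varphi\in\EE^{0,0}(\MGL)=\Hom_{\SH(S)}(\MGL,\EE)$ characterised by $u_i^\ast(\varphi)=\thclass(\mathcal{T}(i))$ for all $i\ge 1$.

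\textbf{Bijection and monoid structure.} That the two assignments are mutually inverse is immediate from uniqueness in Theorem \ref{structure-e_**mgl}(i), together with the identity $\thclass^{\MGL}(\mathcal{T}(i))=u_i$ built into the definition of $\thclass^{\MGL}$. To verify that $\varphi$ is a monoid map I would check unit preservation using the normalization of $\thclass$ against the standard description of the unit $\mathbf{1}_{\SH(S)}\to\MGL$, and then compare the two maps $\MGL\wedge\MGL\rightrightarrows\EE$ given by $\mu_\EE\circ(\varphi\wedge\varphi)$ and $\varphi\circ\mu_{\MGL}$. By the two-variable inverse limit description (\ref{equation:EEMGLMGLtoinverselimit}) of Theorem \ref{structure-e_**mgl}(ii), it suffices to compare their restrictions to the external Thom spaces $\Th(\mathcal{T}(n))\wedge\Th(\mathcal{T}(n))$; since $\mu_{\MGL}$ is induced from the Whitney sum pairing $\Gr\times\Gr\to\Gr$, both restrictions evaluate to the external product $\thclass(\mathcal{T}(n))\boxtimes\thclass(\mathcal{T}(n))$ by the multiplicativity of Thom classes.

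\textbf{Main obstacle.} The principal work is the second comparison: tracking the model-categorical multiplication on $\MGL$ through the two-variable power-series presentation so that it really matches the Whitney sum on Grassmannians, hence the cup product of Thom classes. This is the technical heart of \cite{PPR}, and once Theorems \ref{theorem:Ecohomologyofprojectivebundles}, \ref{theorem:thomisomorphism} and \ref{structure-e_**mgl} are in force their argument goes through verbatim; the new content over nonregular base schemes is entirely absorbed into the construction of Chern classes from Proposition \ref{proposition:picpinfty}.
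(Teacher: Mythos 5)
Your proposal follows the paper's own route step for step: build the inverse from the Thom classes of the tautological bundles via the inverse-limit isomorphism~(\ref{equation:EEtoinverselimit}), verify multiplicativity by comparing the two restrictions to $\Th(\mathcal{T}(n))\wedge\Th(\mathcal{T}(n))$ using~(\ref{equation:EEMGLMGLtoinverselimit}) and the Whitney sum multiplicativity of Thom classes, and lean on the nonregular-base versions of Theorems~\ref{theorem:Ecohomologyofprojectivebundles}, \ref{theorem:thomisomorphism} and~\ref{structure-e_**mgl} so that the Panin--Pimenov--R\"ondigs argument carries over verbatim. One imprecision worth correcting: the identity $\thclass^{\MGL}(\mathcal{T}(i))=u_i$ is not ``built into the definition of $\thclass^{\MGL}$'' --- only $u_1=\thclass^{\MGL}$ is definitional, while the case $i>1$ is a genuine computation requiring the Thom isomorphism Theorem~\ref{theorem:thomisomorphism}, which is precisely where the paper points back to \cite{PPR}.
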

\begin{proof}
Note that $\thclass:=\varphi(\thclass^{\MGL})$ is an orientation on $\EE$ since
\[ 
\varphi(\thclass)|_{Th(\bf 1)}= \varphi(\thclass|_{Th(\bf 1)})=
\varphi(\Sigma_{\PP^{1}}(1))= \Sigma_{\PP^{1}}(\varphi(1))= \Sigma_{\PP^{1}}(1).
\]
Conversely, 
note that $\thclass^\EE$ gives rise to a unique map $\varphi\colon \MGL \to \EE$ in $\SH(S)$ as desired:
In effect, 
the elements $\thclass(\mathcal T(i))$ comprise an element in the target of the isomorphism 
$\EE^{\ast,\ast}(\MGL) \to \varprojlim \EE^{\ast+2i,\ast+i}(\Th(\mathcal T(i)))$ in (\ref{equation:EEtoinverselimit}).
This shows uniqueness of $\varphi\in\EE^{0,0}(\MGL)$.
\\ \indent
To check that $\varphi$ respects the multiplicative structure, 
form the diagram:
\[ 
\xymatrix@C=6em{
\Sigma^{\infty}_{\PP^{1}}(\Th(\mathcal T(i)))(-i) \wedge \Sigma^{\infty}_{\PP^{1}}(\Th(\mathcal T(j)))(-j)
\ar[r]^-{\Sigma^{\infty}_{\PP^{1}}(\mu_{i,j})(-i-j)} \ar[d]_-{u_i\wedge u_j} &
\Sigma^{\infty}_{\PP^{1}}(\Th(\mathcal T(i+j)))(-i-j) \ar[d]^-{u_{i+j}} \\
\MGL \wedge \MGL  \ar[r]^-{\mu_{\MGL}} \ar[d]_-{\varphi \wedge \varphi}
& \MGL  \ar[d]^-{\varphi}  \\
\EE \wedge \EE\ar[r]^-{\mu_{\EE}} & \EE} 
\]
The upper square homotopy commutes since
\begin{eqnarray*}
\varphi \circ u_{i+j} \circ \Sigma^{\infty}_{\PP^{1}}(\mu_{i,j})(-i-j) & = &
\mu^\ast_{i,j}(\thclass(\mathcal T(i+j)))=
\thclass(\mu^\ast_{i,j}(\mathcal T(i+j)))=
\thclass(\mathcal T(i) \times \mathcal T(j)) \\ & = &
\thclass(\mathcal T(i)) \times \thclass(\mathcal T(j))=
\mu_{\EE}(\thclass(\mathcal T(i)) \wedge \thclass(\mathcal T(j))) \\ & = &
\mu_{\EE} \circ ((\varphi \circ u_i) \wedge (\varphi \circ u_j)).
\end{eqnarray*}
Combining (\ref{equation:EEMGLMGLtoinverselimit}) with the equality
$\varphi \circ u_{i+i}\circ\Sigma^{\infty}_{\PP^{1}}(\mu_{i,i})(-2i)=\mu_{\EE}\circ ((\varphi\circ u_i)\wedge (\varphi\circ u_i))$
shows that $\mu_{\EE}\circ (\varphi\wedge\varphi)=\varphi\circ\mu_{\MGL}$ in $\SH(S)$.
\vspace{0.1in}

It remains to show the maps constructed above are inverses of each other.
To begin with, 
if $\thclass\in\EE^{2,1}(\Th(\mathcal O(-1)))$, 
then $\varphi\circ u_i=\thclass(\mathcal T_i)$ for all $i$ and the induced orientation $\thclass^{\prime}:=\varphi(\thclass^{\MGL})$
coincides with $\thclass$ since
$\thclass^{\prime}=\varphi(\thclass^{\MGL})=\varphi(u_1)=\varphi\circ u_1=\thclass(\mathcal T(1))=\thclass$.
On the other hand,  
the monoid map $\varphi^{\prime}$ obtained from the orientation $\thclass:=\varphi(\thclass^{\MGL})$ on $\EE$ satisfies 
$u^*_i(\varphi^{\prime})=\thclass(\mathcal T(i))$ for every $i\geq 0$.
To check that $\varphi^{\prime}=\varphi$,
the isomorphism (\ref{equation:EEtoinverselimit}) shows that it suffices to prove that $u^*_i(\varphi^{\prime})=u^*_i(\varphi)$ 
for all $i\geq 0$.
Now since $u^*_i(\varphi^{\prime})=\thclass(\mathcal T_i)$ it remains to prove that $u^*_i(\varphi)=\thclass(\mathcal T(i))$.
In turn this follows if $u_i= \thclass^{\MGL}(\mathcal T(i))$ in $\MGL^{2i,i}(\Th(\mathcal T(i)))$, 
since then $u^\ast_i(\varphi)= \varphi \circ u_i = \varphi(u_i)=\varphi(\thclass^{\MGL}(\mathcal T(i)))=\thclass(\mathcal T(i))$.
We note that the proof of the equality $u_i= \thclass^{\MGL}(\mathcal T(i))$ given in \cite{PPR} carries over to our setting on 
account of the Thom isomorphism Theorem \ref{theorem:thomisomorphism}.
\end{proof}

\bibliographystyle{plain}
\bibliography{nmpblochproceedings}
\vspace{0.5in}

\begin{center}
Fakult{\"a}t f{\"u}r Mathematik, Universit{\"a}t Regensburg, Germany.\\
e-mail: niko.naumann@mathematik.uni-regensburg.de
\end{center}
\begin{center}
Fakult{\"a}t f{\"u}r Mathematik, Universit{\"a}t Regensburg, Germany.\\
e-mail: Markus.Spitzweck@mathematik.uni-regensburg.de
\end{center}
\begin{center}
Department of Mathematics, University of Oslo, Norway.\\
e-mail: paularne@math.uio.no
\end{center}
\end{document}